\newtheorem{theorem}{Theorem}[section]
\newtheorem{lemma}[theorem]{Lemma}
\newtheorem{proposition}[theorem]{Proposition}
\newtheorem{corollary}[theorem]{Corollary}
\newtheorem*{theorem*}{Theorem}
\theoremstyle{remark}
\newtheorem{remark}[theorem]{Remark}
\newtheorem{definition}[theorem]{Definition}
\newtheorem{example}[theorem]{Example}
\numberwithin{equation}{section}
\newcommand{\Z}{\mathbb{Z}}
\newcommand{\N}{\mathbb{N}}
\newcommand{\C}{\mathbb{C}}
\newcommand{\T}{\mathbb{T}}
\newcommand{\Li}{\mathcal{L}}
\newcommand{\K}{\mathcal{K}}
\newcommand{\clspan}{\overline{\operatorname{span}}}
\newcommand{\OX}{\mathcal{O}_X}
\newcommand{\Q}{\mathcal{Q}}
\newcommand{\im}{\operatorname{im }}
\newcommand{\clspn}{\overline{\textnormal{span}}}
\newcommand{\tn}{\textnormal}
\newcommand{\gae}{\lower 2pt \hbox{$\, \buildrel {\scriptstyle >}\over {\scriptstyle
\sim}\,$}}
\newcommand{\lae}{\lower 2pt \hbox{$\, \buildrel {\scriptstyle <}\over {\scriptstyle
\sim}\,$}}
\newcommand{\MU}[1]{
\setbox0\hbox{$#1$}
\setbox1\hbox{$W$}
\ifdim\wd0>\wd1 #1^{\sim} \else \widetilde{#1} \fi
}
\begin{document}
\title{A Cuntz-Krieger Uniqueness Theorem for Cuntz-Pimsner Algebras}

\author{Menev\c se Ery\"uzl\"u Paulovicks}

\address{Department of Mathematics, University of Colorado Boulder, Boulder, CO 80309-0395 \\USA}
\email{menevse.paulovicks@gmail.com}

\author{Mark Tomforde}

\address{Department of Mathematics \\ University of Colorado \\ Colorado Springs, CO 80918-3733 \\USA}
\email{mark.tomforde@gmail.com}

\thanks{This work was supported by a grant from the Simons Foundation (\#527708 to Mark Tomforde)}

%new Award Number: 527708

\date{\today}

\subjclass[2020]{46L55, 46L08}

\keywords{$C^*$-algebras, Cuntz-Pimsner algebras, $C^*$-correspondences}

\begin{abstract}
We introduce a property of $C^*$-correspondences, which we call Condition~(S), to serve as an analogue of Condition~(L) of graphs.  We use Condition~(S) to prove a Cuntz-Krieger Uniqueness Theorem for Cuntz-Pimsner algebras and obtain sufficient conditions for simplicity of Cuntz-Pimsner algebras.  We also prove that if $\Q$ is a topological quiver with no sinks and $X(\Q)$ is the associated $C^*$-correspondence, then $X(\Q)$ satisfies Condition~(S) if and only if $\Q$ satisfies Condition~(L).  Finally, we consider several examples to compare and contrast Condition~(S) with Schweizer's nonperiodic condition.
\end{abstract}

\maketitle

%%%%%%%%%%%%%%%%%%%%%%%%%%%%%%%%%%%%%%%%%%%%%%%%%%%%%%%%
\section{Introduction}
%%%%%%%%%%%%%%%%%%%%%%%%%%%%%%%%%%%%%%%%%%%%%%%%%%%%%%%%

In 1997 Pimsner introduced a method for constructing a $C^*$-algebra $\mathcal{O}_X$ from a $C^*$-correspondence $X$.  These $C^*$-algebras have been named \emph{Cuntz-Pimsner algebras}, and a great deal of research has focused on understanding how the structure of the Cuntz-Pimsner algebra $\mathcal{O}_X$ is determined by properties of the $C^*$-correspondence $X$.   As with any class of $C^*$-algebras, one wishes to understand the ideal structure of Cuntz-Pimsner algebras and, in particular, to characterize when Cuntz-Pimsner algebras are simple.  Partial results for simplicity of Cuntz-Pimsner algebras were obtained early in the subject's history:  Let $X$ be a $C^*$-correspondence over a $C^*$-algebra $A$.  In  \cite[Corollary~4.4]{KPW} Kajiwara, Pinzari, and Watatani provided sufficient conditions for simplicity of $X$ when $A$ is unital and $X$ is full and finite projective, and the left action of $A$ on $X$ is injective and by compact operators.  In addition, in \cite[Theorem~1]{MS} Muhly and Solel established  sufficient conditions for simplicity of $\mathcal{O}_X$ under the hypotheses that $A$ is unital and $X$ is projective and essential with injective left action.  In 2001 Schweizer introduced a property for $C^*$-correspondences that he called \emph{nonperiodic} and proved that if $A$ is unital and $X$ is full with injective left action, then $\mathcal{O}_X$ is simple if and only if $X$ is nonperiodic and $A$ has no nontrivial invariant ideals \cite[Theorem~3.9]{Sch}.  Subsequently, Muhly and Solel wrote in \cite{MS2} that Schweizer's result was ``the last word on simplicity" for Cuntz-Pimsner algebras.  However, as we shall describe in more detail in this paper, when Schweizer's hypotheses on $X$ and $A$ are removed, it is still an open problem to characterize simplicity of Cuntz-Pimsner algebras, and even more significant, we shall exhibit examples showing that when Schweizer's hypotheses are removed, the conditions ``$X$ is nonperiodic and $A$ has no nontrivial invariant ideals" are insufficient to imply that $\mathcal{O}_X$ is simple.  Consequently, throughout the course of this paper we hope to convince the reader that there remain many open questions concerning simplicity of Cuntz-Pimsner algebras, and Schweizer's result (while significant and elegant) is far from the last word on the topic.

In the development of Cuntz-Pimsner algebras, graph $C^*$-algebras have provided useful intuition and guidance for the investigation of more general Cuntz-Pimsner algebras.  There are two fundamental structure theorems in the theory of graph $C^*$-algebras:  the Gauge-Invariant Uniqueness Theroem and the Cuntz-Krieger Uniqueness Theorem.  Katsura has successfully extended the Gauge-Invariant Uniqueness Theorem  to all Cuntz-Pimsner algebras \cite[Theorem~6.4]{Kat} and used this result to classify the gauge-invariant ideals in any Cuntz-Pimsner algebra \cite[Theorem~8.6]{Kat2}.  By contrast, up to this point there has been no extension of the Cuntz-Krieger Uniqueness Theorem to general Cuntz-Pimsner algebras.  

A graph is said to satisfy Condition~(L) if every cycle in the graph has an exit.  The Cuntz-Krieger Uniqueness Theorem for graph $C^*$-algebras states that if a graph $E$ satisfies Condition~(L) and $\phi : C^*(E) \to A$ is a $*$-homomorphism that is nonzero on all vertex projections, then $\phi$ is injective.  The Cuntz-Krieger Uniqueness Theorem is used to obtain a characterization of simplicity for graph $C^*$-algebras, namely $C^*(E)$ is simple if and only if $E$ satisfies Condition~(L) and $E$ contains no nontrivial saturated hereditary subsets of vertices.

It would be advantageous to generalize the Cuntz-Krieger Uniqueness Theorem to all Cuntz-Pimsner algebras.  Among other applications, such a theorem would provide sufficient conditions for simplicity of general Cuntz-Pimsner algebras as an almost immediate consequence.  The main obstacle to developing a generalized Cuntz-Krieger Uniqueness Theorem has been an inability to formulate an appropriate analogue of Condition~(L) for $C^*$-correspondences.  While versions of the Cuntz-Krieger Uniqueness Theorem have been obtained by Katsura for topological graph $C^*$-algebras \cite[Theorem~5.12]{Kat3} and by Muhly and the second author for topological quiver $C^*$-algebras \cite[Theorem~6.16]{MT2}, in these situations the $C^*$-correspondence is built from an object (e.g., graph, topological graph, quiver) providing notions of edges, paths, and cycles for which Condition~(L) can be formulated.  Up to this point, it has been unclear how to translate Condition~(L) to an appropriate property entirely in terms of the $C^*$-correspondence.

Comparing Schweizer's simplicity result (i.e., a Cuntz-Pimsner algebra is simple if and only if the $C^*$-correspondence is nonperiodic and $A$ has no nontrivial invariant ideals) to the simplicity result for graph $C^*$-algebras (i.e., a graph $C^*$-algebra is simple if and only if the graph satisfies Condition~(L) and the graph has no nontrivial saturated hereditary subsets), and using the fact that invariant ideals in $A$ correspond to saturated hereditary subsets in the graph, it is natural to conjecture that nonperiodic is the appropriate analogue of Condition~(L) for $C^*$-correspondences.  However, we shall show in this paper that this is not the case.  In particular, in Section~\ref{examination-sec} we show that even under the hypotheses of Schweizer's simplicity theorem there are graphs that do not satisfy Condition~(L) but whose $C^*$-correspondences are nonperiodic.  Therefore nonperiodic is an insufficient condition for formulating a general Cuntz-Krieger Uniqueness Theorem for Cuntz-Pimsner algebras.

In this paper we make some of the first progress in establishing an analogue of Condition~(L) for $C^*$-correspondences that do not necessarily come from underlying objects such as graphs, topological graphs, or quivers.  In particular, in Section~\ref{CK-sec} we define a property of $C^*$-correspondences that we call Condition~(S).  We use Condition~(S) to prove a generalization of the Cuntz-Krieger Uniqueness Theorem (see Theorem~\ref{CKUT-for-CP-algebras-thm} and Corollary~\ref{CK-for-CP-cor}) for Cuntz-Pimsner algebras of $C^*$-correspondences with injective left action:  If $X$ satisfies Condition~(S) and $\rho : \mathcal{O}_X \to B$ is a $*$-homomorphism with $\rho|_{\pi_A(A)}$ injective, then $\rho$ is injective.   One of the most striking aspects of our Cuntz-Krieger Uniqueness Theorem is that the only hypothesis we impose on the $C^*$-correspondence is the injectivity of the left action (cf.~Remark~\ref{S-implies-injective-rem}).

In Section~\ref{simplicity-sec} we are able to apply our Cuntz-Krieger Uniqueness Theorem to obtain sufficient conditions for simplicity of a Cuntz-Pimsner algebra.  In particular, for a $C^*$-correspondence with injective left action, we show in Theorem~\ref{CP-simplicity-thm} that if $X$ satisfies Condition~(S) and $A$ has no nontrivial invariant ideals, then $\mathcal{O}_X$ is simple.  We are then able to combine our work with the results of Kwa\'sniewski and Meyer \cite{KM} (specifically, we apply \cite[Corollary~9.16]{KM}) to cover the non-injective left action case, and obtain a result giving sufficient conditions for simplicity of the Cuntz-Pimsner algebra of any $C^*$-correspondence.  We point out that our lack of any restrictions on the $C^*$-correspondence is in stark contrast to other simplicity theorems for Cuntz-Pimsner algebras, which often require a plethora of hypotheses on the $C^*$-correspondence.  For example, Schweizer's simplicity theorem, in addition to needing the left action to be injective, also requires the $C^*$-correspondence to be full and the coefficient algebra to be unital.  Likewise, the various simplicity results for Cuntz-Pimsner algebras obtained in \cite{CKO} cover disparate cases under various hypotheses (e.g., \cite[Proposition~9.4]{CKO} requires the $C^*$-correspondence to be a Hilbert bimodule with non-injective left action, \cite[Proposition~9.5]{CKO} requires the $C^*$-correspondence to have injective left action, be nondegenerate, and be full on the right, and \cite[Proposition~9.6]{CKO} requires $\overline{ \langle X, J_X X \rangle}$ to be liminal).  Consequently, our simplicity theorem applies to classes of Cuntz-Pimsner algebras beyond those covered by Schweizer's simplicity theorem and the simplicity results of \cite{CKO} and \cite{KM}.

In Section~\ref{L-iff-S-sec} we prove that if $\Q$ is a topological quiver with no sinks and $X(\Q)$ is the associated $C^*$-correspondence, then $X(\Q)$ satisfies Condition~(S) if and only if $\Q$ satisfies Condition~(L).  (The corresponding results for topological graphs and graphs follow as special cases.)  This gives additional credibility to Condition~(S) as an appropriate analogue of Condition~(L).

In Section~\ref{examination-sec} we examine Schweizer's nonperiodic condition in more detail to understand its relationship with Condition~(S) and also Condition~(L).  We produce examples showing that even for finite graphs with no sinks and no sources, nonperiodic is distinct from Condition~(L).  We also provide examples showing that if any of the hypotheses of Schweizer's theorem are removed, the conclusion of the theorem need not hold.  This shows that a condition other than ``nonperiodic" is needed to characterize simplicity of Cuntz-Pimsner algebras when any of the hypotheses of Schweizer's theorem are removed.  The conclusions we draw from our examples are summarized in Remark~\ref{conclusions-comparison-rem}.  We then end the paper with two open questions related to Condition~(S), outlining what remains to be accomplished to  develop the precisely correct analogue of Condition~(L) for all $C^*$-correspondences.

It should also be noted that our Cuntz-Krieger Uniqueness Theorem is related to the results of \cite{COP} and \cite{CKO}.  A Cuntz-Pimsner algebra is said to have the \emph{ideal intersection property} when every nonzero ideal in the Cuntz-Pimsner algebra has nonzero intersection with the coefficient algebra.  Establishing a Cuntz-Krieger Uniqueness Theorem is tantamount to characterizing the ideal intersection property for Cuntz-Pimsner algebras (i.e., for a $*$-homomorphism $\rho : \mathcal{O}_X \to B$ one applies the ideal intersection property to $\ker \rho$).  In \cite{COP} the authors introduce a notion of Condition~(L) for algebraic $R$-systems and show in \cite[Theorem~4.2]{COP} that their Condition~(L) is equivalent to the ideal intersection property for the associated algebraic Cuntz-Pimsner ring.

In \cite[Definition~8.1]{CKO} the authors introduce a property for $C^*$-correspondences they call \emph{$J_X$-acyclic}.  The $J_X$-acyclicity condition may be viewed as both an analytic analogue of Condition~(L) from \cite[Theorem~4.2]{COP} as well as a generalization of Schweizer's nonperiodic condition that accounts for the presence of nonzero ideals in $J_X$.  In \cite[Definition~8.3]{CKO} the authors prove that $J_X$-acyclicity of a $C^*$-correspondence is a necessary condition for the associated Cuntz-Pimsner algebra to have the ideal intersection property.  (They also prove a partial converse under the somewhat strong hypothesis that $\overline{ \langle X, J_X X \rangle}$ is liminal.)   Consequently, the results of \cite{CKO} can be viewed as a complementary to the results of this paper.  While \cite[Definition~8.3]{CKO} shows that $J_X$-acyclicity is necessary for the ideal intersection property (and hence necessary for a Cuntz-Krieger Uniqueness Theorem), Theorem~\ref{CKUT-for-CP-algebras-thm} of this paper shows that Condition~(S) is a sufficient condition for a Cuntz-Krieger Uniqueness Theorem.  It remains an open problem to find conditions that are both necessary and sufficient for a Cuntz-Krieger Uniqueness Theorem to hold, but \cite{CKO} combined with the results of paper show that such conditions must lie somewhere between $J_X$-acyclicity and Condition~(S).

%%%%%%%%%%%%%%%%%%%%%%%%%%%%%%%%%%%%%%%%%%%%%%%%%%%%%%%%
\section{Preliminaries}
%%%%%%%%%%%%%%%%%%%%%%%%%%%%%%%%%%%%%%%%%%%%%%%%%%%%%%%%

Let $A$ be a $C^*$-algebra, and let $X$ be a (right) Hilbert $A$-module.  Let $\langle \cdot, \cdot \rangle_A$ denote the $A$-valued inner product on $X$.  We let $\Li(X)$ denote the
$C^*$-algebra of adjointable operators on $X$, and we let $\K (X)$ denote
the closed two-sided ideal of compact operators given by $$\K (X) := \clspan
\{ \Theta_{\xi,\eta} : \xi, \eta \in X \}$$ where $\Theta_{\xi,\eta}$ is
defined by $\Theta_{\xi,\eta} (\zeta) := \xi \langle \eta, \zeta
\rangle_A$.

\begin{definition}
Let $A$ be a $C^*$-algebra.  A \emph{$C^*$-correspondence} over $A$ is a (right)
Hilbert $A$-module $X$ together with a $*$-homomorphism $\phi_X : A \to
\Li(X)$.  We consider $\phi_X$ as providing a left action of $A$ on $X$ by 
$a \cdot x := \phi_X(a) x$.  
\end{definition}

\begin{remark}
When we say that $X$ is a $C^*$-correspondence over a $C^*$-algebra $A$, note that we implicitly have both an $A$-valued inner product $\langle \cdot, \cdot \rangle_A$ and a left action $\phi_X : A \to \Li (X)$, even if $\langle \cdot, \cdot \rangle_A$ and $\phi_X$ are not explicitly specified.  When no confusion arises (e.g., when we have a single $C^*$-algebra $A$ and a single $C^*$-correspondence $X$) we shall often drop the subscripts on both the inner product and the left action and simply write $\langle \cdot , \cdot \rangle := \langle \cdot , \cdot \rangle_A$ and $\phi := \phi_X$ for ease of notation.
\end{remark}

\begin{definition} \label{Toep-defn}
If $X$ is a $C^*$-correspondence over a $C^*$-algebra $A$, a \emph{representation} of
$X$ into a $C^*$-algebra $B$ is a pair $(\psi, \pi)$ consisting of a linear map
$\psi : X \to B$ and a $*$-homomorphism $\pi : A \to B$ satisfying 
\begin{enumerate}
\item[(i)] $\psi(\xi)^* \psi(\eta) = \pi(\langle \xi, \eta \rangle_A)$
\item[(ii)] $\psi(\phi(a)\xi) = \pi(a) \psi(\xi)$ 
\item[(iii)] $\psi(\xi \cdot a) = \psi(\xi) \pi(a)$
\end{enumerate}
for all $\xi, \eta \in X$ and $a \in A$.  We often write $(\psi, \pi) : (X,A) \to B$ to denote this representation, and we let $C^*(\psi, \pi)$ denote the $C^*$-subalgebra of $B$ generated by $\psi(X) \cup \pi(A)$. For a representation $(\psi, \pi) : (X,A) \to B$ the relations (i)--(ii) above imply 
$$C^*(\psi, \pi)= \tn{ $\clspn\{\psi^n(\xi)\psi^m(\eta)^*:$ $\xi\in X^{\otimes n}, \eta\in X^{\otimes m},$ $n,m\geq 0$\}}.$$  A representation $(\psi, \pi)$ is said to be \emph{injective} if $\pi$ is
injective.  Note that in this case $\psi$ will be isometric (and hence also injective) since $$\|
\psi(\xi) \|^2 = \| \psi(\xi)^* \psi(\xi) \| = \| \pi (\langle \xi, \xi \rangle_A)
\| = \| \langle \xi, \xi \rangle_A \| = \| \xi \|^2.$$

\end{definition}

\begin{definition}
For a representation $(\psi, \pi) : (X,A) \to B$ there exists a $*$-homomorphism $\pi^{(1)} : \K (X) \to B$ with the property that
$$\pi^{(1)} (\Theta_{\xi,\eta}) = \psi(\xi) \psi(\eta)^*.$$  See \cite[p.~202]{Pim},
\cite[Lemma~2.2]{KPW}, and \cite[Remark~1.7]{FR} for details on the existence
of this $*$-homomorphism.  Also note that if $(\psi,\pi)$ is an injective
 representation, then $\pi^{(1)}$ is injective as well
\cite[Proposition~1.6(2)]{FR}.
\end{definition}

\begin{definition}
For an ideal $I$ in a $C^*$-algebra $A$ we define $$I^\perp := \{ a \in A :
ab=0 \text{ for all } b \in I \}$$ and we refer to $I^\perp$ as the
\emph{annihilator of $I$ in $A$}.  If $X$ is a $C^*$-correspondence over
$A$, we define an ideal $J(X)$ of $A$ by $$J(X) := \phi_X^{-1}(\K(X)).$$  We also
define an ideal $J_X$ of $A$ by $$J_X := J(X) \cap (\ker \phi_X)^\perp.$$  Note
that $J_X = J(X)$ when $\phi_X$ is injective, and that $J_X$ is the maximal
ideal of $J(X)$ on which the restriction of $\phi_X$ is an injection into $\K(X)$.
\end{definition}

\begin{definition}
If $X$ is a $C^*$-correspondence over a $C^*$-algebra $A$, we say that a representation $(\psi, \pi) : (X,A) \to B$ is \emph{covariant} if
$$\pi^{(1)} (\phi(a)) = \pi(a) \qquad \text{ for all $a \in J_X$}.$$  We say
that a covariant representation $(\psi_X, \pi_A) : (X,A) \to B$ is
\emph{universal} if whenever $(\psi, \pi) : (X,A) \to C$ is a covariant representation there exists a $*$-homomorphism $\rho_{(\psi,\pi)}: C^*(\psi_X,\pi_A) \to C$ with the property that $\psi =
\rho_{(\psi,\pi)} \circ \psi_X$ and $\pi = \rho_{(\psi,\pi)} \circ \pi_A$.
\end{definition}

\begin{definition}
If $X$ is a $C^*$-correspondence over a $C^*$-algebra $A$, the \emph{Cuntz-Pimsner algebra}, which we shall
denote $\mathcal{O}_X$, is the $C^*$-algebra $C^*(\psi_X,\pi_A)$ where
$(\psi_X, \pi_A)$ is a universal covariant representation of $X$.  The existence of $\mathcal{O}_X$ is established  in \cite[Proposition~1.3]{FMR}, and the universal property ensures $\mathcal{O}_X$ is unique up to isomorphism.
\end{definition}

\begin{remark}
The universal property of a Cuntz-Pimsner algebra ensures the existence of a canonical circle action, which is known as the \emph{gauge action}.  Let $X$ be a $C^*$-correspondence over a $C^*$-algebra $A$. If $\mathcal{O}_X$ is the associated Cuntz-Pimsner algebra and $(\psi_X, \pi_A) : (X,A) \to \mathcal{O}_X$ is a universal covariant representation, then for any $z \in \mathbb{T}$ we have that
$(z\psi_X,\pi_A)$ is also a universal covariant representation. Hence by the universal property, there exists a homomorphism $\gamma_z: \mathcal{O}_X \rightarrow \mathcal{O}_X$ such that
$\gamma_z(\pi_A(a))=\pi_A(a)$ for all $a \in A$ and $\gamma_z(\psi_X(\xi))=z\psi_X(\xi)$
for all $\xi \in X$.  Since $\gamma_{z^{-1}}$ is an inverse for this
homomorphism, we see that $\gamma_z$ is an automorphism. Thus we have an action
$\gamma : \mathbb{T} \rightarrow \operatorname{Aut} \mathcal{O}_X$ with the
property that $\gamma_z (\pi_A(a)) = \pi_A(a)$ for all $a \in A$ and $\gamma_z(\psi_X(\xi)) =
z \psi_X(\xi)$ for all $\xi \in X$. Furthermore, a routine $\epsilon / 3$-argument shows that $\gamma$ is
strongly continuous. We call $\gamma$ the \emph{gauge action on $\mathcal{O}_X$}.
\end{remark}

Because $\T$ is compact, averaging over $\gamma$ with respect to normalized Haar
measure gives a conditional expectation $E$ of $\mathcal{O}_X$ onto the fixed point algebra
$\mathcal{O}_X^\gamma$ by $$E(x) := \int_{\T} \gamma_z (x) \, dz \qquad \text{ for $x \in
\mathcal{O}_X$,}$$ where the integral denotes vector-valued integration in $\mathcal{O}_X$ and $dz$ denotes the normalized Haar measure on $\mathbb{T}$.  The map $E$ is completely positive, has norm $1$, and is faithful in the sense
that $E(a^*a) = 0$ implies $a=0$.

\begin{remark}
Let $X$ be a $C^*$-correspondence over a $C^*$-algebra $A$.  Recalling that there is $*$-homomorphism $\phi_X : A \to \Li (X)$ providing a left action of $A$ on $X$ by $a \cdot x := \phi_X (a) (x)$, we let $X^{\otimes n} := X \otimes_A \ldots \otimes_A X$ denote the $n$-fold $A$-balanced tensor product of $X$ with itself.  We note that $X^{\otimes n}$ is a $C^*$-correspondence over $A$ with right action $(x_1 \otimes \ldots \otimes x_n) \cdot a = x_1 \otimes \ldots \otimes x_n \cdot a$, left action $a \cdot (x_1 \otimes \ldots \otimes x_n) = a \cdot x_1 \otimes \ldots \otimes x_n$, and inner product defined inductively by
$$\langle x_1 \otimes \ldots \otimes x_n, y_1 \otimes \ldots \otimes y_n \rangle := \langle x_2 \otimes \ldots \otimes x_n,  \langle x_1, y_1 \rangle \cdot y_2 \otimes \ldots \otimes y_n \rangle.$$
If $(\psi,\pi) : (X,A) \to B$ is a representation of
$X$ in a $C^*$-algebra $B$, then it is shown in \cite[Lemma~3.6]{FMR} that for
each $n \in \N$ there is a linear map $\psi^{\otimes n} : X^{\otimes n} \to B$ satisfying $$\psi^{\otimes n} (x_1 \otimes \ldots \otimes x_n) = \psi(x_1)
\ldots \psi(x_n) \qquad \text{ for $x_1, \ldots x_n \in X$}.$$  Furthermore,
$(\psi^{\otimes n}, \pi) : (X^{\otimes n}, A) \to B$ is a representation
and there exists a $*$-homomorphism $\pi^{(n)} : \K(X^{\otimes n}) \to B$ such that
$$\pi^{(n)}(\Theta_{x,y}) = \psi^{\otimes n}(x) \psi^{\otimes n}(y)^* \qquad \text{
for $x,y \in X^{\otimes n}$}.$$ 
Suppose $X$ is a $C^*$-correspondence over a $C^*$-algebra $A$, and suppose that the left action $\phi_X: A \rightarrow \Li(X)$ is injective.  If $(\psi ,\pi) : (X,A) \to B$ is a covariant representation, we let $1^k$ denote the identity operator on $X^{\otimes k}$ and
define $$\mathcal{C}_n := A \otimes1^n + \K(X) \otimes 1^{n-1} + \K(X^{\otimes
2}) \otimes 1^{n-2} + \ldots + \K(X^{\otimes n}) \subseteq \Li (X^{\otimes n}).$$ Furthermore, the homomorphism
$\kappa_n^{\psi,\pi} : \mathcal{C}_n \to B$ given by $$ \kappa_n^{\psi,\pi} (k_0
\otimes 1^n + k_1 \otimes 1^{n-1} + \ldots + k_n) := \pi(k_0) + \pi^{(1)}(k_1)
+ \ldots \pi^{(n)}(k_n)$$ is injective whenever $\pi$ is injective
\cite[Proposition~4.6]{FMR}.  It is also a fact that if $\mathcal{C} := \varinjlim
\mathcal{C}_n$ under the isometric homomorphisms $c \in \mathcal{C}_n \mapsto c
\otimes 1 \in \mathcal{C}_{n+1}$, then the homomorphisms $\kappa_n^{\psi,\pi} :
\mathcal{C}_n \to B$ induce a map $\kappa^{\psi,\pi} : \mathcal{C} \to B$, and
\cite[Corollary~4.8]{FMR} shows that $\kappa^{\psi,\pi}$ is injective when $\pi$
is injective.  Furthermore, \cite[Corollary~4.9]{FMR} shows that for a universal covariant representation $(\psi_X, \pi_A) : (X,A) \to \mathcal{O}_X$ we have that $\kappa^{\psi_X,\pi_A} : \mathcal{C} \to \OX$
is an injective map onto the fixed point algebra $\OX^\gamma$. 
\end{remark}

%%%%%%%%%%%%%%%%%%%%%%%%%%%%%%%%%%%%%%%%%%%%%%%%%%%%%%%%
\section{A Cuntz-Krieger Uniqueness Theorem for Cuntz-Pimsner algebras} \label{CK-sec}
%%%%%%%%%%%%%%%%%%%%%%%%%%%%%%%%%%%%%%%%%%%%%%%%%%%%%%%%

\begin{definition} \label{nonreturning-def}
Let $X$ be a $C^*$-correspondence over $A$, and let $(\psi_X, \pi_A) : (X,A) \to \mathcal{O}_X$ be a universal covariant representation of $(X,A)$ into $\mathcal{O}_X$.   We define a vector $\zeta \in X^{\otimes m}$ with $m \in \N$ to be \emph{nonreturning} if whenever $n \in \{ 1, \ldots, m-1 \}$ and $\xi \in X^{\otimes n}$, then
$$ \psi_X^{\otimes m} (\zeta)^* \psi_X^{\otimes n} (\xi) \psi_X^{\otimes m} (\zeta) = 0.$$
\end{definition}

\begin{remark}
The properties and nomenclature in our definition of a nonreturning vector are inspired by the Katsura's notion of a nonreturning path in a topological graph (see \cite[Definition~5.5]{Kat3}).  Katsura shows in \cite[Lemma~5.6]{Kat3} and \cite[Lemma~5.7]{Kat3} that a nonreturning path in a topological graph can be used to construct a compactly supported continuous function that is a nonreturning vector in the sense of our Definition~\ref{nonreturning-def}. (Muhly and the second author have also shown in \cite[\S6]{MT} that Katsura's results on nonreturning paths can be generalized to topological quivers.)
\end{remark}

\begin{lemma}
Let $X$ be a $C^*$-correspondence over $A$ and let $\zeta \in X^{\otimes m}$ be a nonreturning vector.  If $(\psi, \pi) : (X,A) \to B$ is any covariant representation, then for any $n \in \{ 1, \ldots, m-1 \}$ and $\xi \in X^{\otimes n}$, we have
$$\psi^{\otimes m} (\zeta)^* \psi^{\otimes n}(\xi) \psi^{\otimes m} (\zeta) = 0.$$
\end{lemma}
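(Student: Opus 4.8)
The plan is to reduce the statement about an arbitrary covariant representation $(\psi,\pi)$ to the defining statement about the universal covariant representation $(\psi_X,\pi_A)$, by pushing the relevant identity forward along the canonical $*$-homomorphism supplied by the universal property of $\mathcal{O}_X$.

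First I would invoke the universal property: since $(\psi,\pi) : (X,A) \to B$ is a covariant representation and $(\psi_X,\pi_A)$ is a universal covariant representation, there is a $*$-homomorphism $\rho := \rho_{(\psi,\pi)} : \mathcal{O}_X \to B$ satisfying $\psi = \rho \circ \psi_X$ and $\pi = \rho \circ \pi_A$. The crucial intermediate claim is that $\rho$ also intertwines the tensor-power maps, that is, $\psi^{\otimes k} = \rho \circ \psi_X^{\otimes k}$ for every $k$. To verify this, I would use the formula from the preliminaries that $\psi^{\otimes k}(x_1 \otimes \cdots \otimes x_k) = \psi(x_1)\cdots\psi(x_k)$ on elementary tensors; substituting $\psi = \rho \circ \psi_X$ and using that $\rho$ is multiplicative gives
$$\psi^{\otimes k}(x_1 \otimes \cdots \otimes x_k) = \rho\big(\psi_X(x_1)\big)\cdots\rho\big(\psi_X(x_k)\big) = \rho\big(\psi_X^{\otimes k}(x_1 \otimes \cdots \otimes x_k)\big).$$
Thus $\psi^{\otimes k}$ and $\rho \circ \psi_X^{\otimes k}$ are continuous linear maps agreeing on the elementary tensors, which span a dense subspace of $X^{\otimes k}$, and hence they agree on all of $X^{\otimes k}$.

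With the intertwining in hand, the conclusion is immediate. For $n \in \{1,\ldots,m-1\}$ and $\xi \in X^{\otimes n}$, using that $\rho$ preserves products and adjoints,
$$\psi^{\otimes m}(\zeta)^*\,\psi^{\otimes n}(\xi)\,\psi^{\otimes m}(\zeta) = \rho\big(\psi_X^{\otimes m}(\zeta)^*\,\psi_X^{\otimes n}(\xi)\,\psi_X^{\otimes m}(\zeta)\big).$$
Since $\zeta$ is nonreturning, the argument of $\rho$ on the right-hand side vanishes by Definition~\ref{nonreturning-def}, so the left-hand side equals $\rho(0) = 0$, as desired.

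I expect the only point requiring genuine care to be the intermediate claim $\psi^{\otimes k} = \rho \circ \psi_X^{\otimes k}$: Definition~\ref{nonreturning-def} quantifies over arbitrary vectors $\xi \in X^{\otimes n}$ and $\zeta \in X^{\otimes m}$, not merely elementary tensors, so one cannot simply apply the formula for $\psi^{\otimes k}$ pointwise but must invoke density of the elementary tensors together with continuity of both maps to extend the identity to all of $X^{\otimes k}$. Everything else is a formal consequence of the universal property and the fact that $\rho$ is a $*$-homomorphism.
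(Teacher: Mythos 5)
Your proof is correct and takes essentially the same route as the paper's: both push the defining identity for nonreturning vectors forward along the induced $*$-homomorphism $\rho_{(\psi,\pi)}$, using $\psi^{\otimes k} = \rho_{(\psi,\pi)} \circ \psi_X^{\otimes k}$ and the fact that $\rho_{(\psi,\pi)}$ preserves products and adjoints. The only difference is that you explicitly verify the intertwining identity on elementary tensors and extend by density and continuity, a step the paper uses without comment.
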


\begin{proof}
Let $(\psi_X, \pi_A) : (X,A) \to \mathcal{O}_X$ be a universal covariant  representation of $(X,A)$ into $\mathcal{O}_X$.  If $\rho := \rho_{(\psi, \pi)} : \mathcal{O}_X \to B$ denotes the $*$-homomorphism induced by $(\psi, \pi)$, then 
\begin{align*}
\psi^{\otimes m} (\zeta)^* \psi^{\otimes n}(\xi) \psi^{\otimes m} (\zeta) &= (\rho \circ \psi_X^{\otimes m}) (\zeta)^* (\rho \circ \psi_X^{\otimes n}) (\xi) (\rho \circ \psi_X^{\otimes m}) (\zeta) \\
&= \rho( \psi_X^{\otimes m} (\zeta)^* \psi_X^{\otimes n}(\xi)  \psi_X^{\otimes m} (\zeta) ) = \rho (0) = 0.  \qedhere
\end{align*}
\end{proof}

\noindent \textbf{Condition~(S):} We say that a $C^*$-correspondence $X$ over a $C^*$-algebra $A$ satisfies Condition~(S) if whenever $a \in A$ with $a \geq 0$, then for all $n \in \N$ and for all $\epsilon > 0$ there exists $m > n$ and a nonreturning vector $\zeta \in X^{\otimes m}$ with $\| \zeta \| = 1$ and
$$ \| \langle \zeta, a \zeta \rangle \| > \| a \| - \epsilon.$$

\begin{remark}
The (S) in Condition~(S) is meant to stand for ``simplicity".
\end{remark}

\begin{proposition} \label{Condition-S-implies-injective-prop}
If $X$ is a $C^*$-correspondence over $A$ satisfying Condition~(S), then the left action $\phi_X: A \to \mathcal{L}(X)$ is injective.
\end{proposition}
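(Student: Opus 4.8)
The plan is to prove the contrapositive: suppose the left action $\phi_X$ is \emph{not} injective, and show that $X$ cannot satisfy Condition~(S). So fix a nonzero element $a \in \ker \phi_X$. Since $\ker \phi_X$ is a closed two-sided ideal and hence a $C^*$-subalgebra, I may replace $a$ by $a^*a$ and assume without loss of generality that $a \geq 0$ and $a \neq 0$, so that $\|a\| > 0$.

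The key observation is that if $a \in \ker \phi_X$, then $a$ annihilates every tensor power $X^{\otimes m}$ from the left for $m \geq 1$. Indeed, for any $\zeta = x_1 \otimes \cdots \otimes x_m \in X^{\otimes m}$ with $m \geq 1$, the left action is $\phi_{X^{\otimes m}}(a)(\zeta) = (\phi_X(a) x_1) \otimes x_2 \otimes \cdots \otimes x_m = 0$, and this extends by linearity and continuity to all of $X^{\otimes m}$. The crucial point for Condition~(S) is how this interacts with the inner product. I would compute, for a simple tensor $\zeta = x_1 \otimes \cdots \otimes x_m$, that
$$\langle \zeta, a \cdot \zeta \rangle = \langle x_1 \otimes \cdots \otimes x_m, \phi_X(a)x_1 \otimes x_2 \otimes \cdots \otimes x_m \rangle,$$
and the inductive definition of the inner product on $X^{\otimes m}$ unwinds the outer slots until the innermost pairing produces the factor $\langle x_1, \phi_X(a) x_1 \rangle = 0$. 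Hence $\langle \zeta, a \cdot \zeta \rangle = 0$ for all $\zeta \in X^{\otimes m}$ with $m \geq 1$.

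Now I would assemble the contradiction. Condition~(S) applied to this fixed $a \geq 0$, with (say) $n = 1$ and $\epsilon = \|a\|/2 > 0$, demands the existence of some $m > 1$ and a nonreturning vector $\zeta \in X^{\otimes m}$ with $\|\zeta\| = 1$ and $\|\langle \zeta, a\zeta\rangle\| > \|a\| - \epsilon = \|a\|/2 > 0$. But $m > 1 \geq 1$, so by the preceding paragraph $\langle \zeta, a\zeta \rangle = 0$, whence $\|\langle \zeta, a\zeta\rangle\| = 0$, contradicting the strict inequality. Therefore no such $a$ can exist, meaning $\ker \phi_X = 0$ and $\phi_X$ is injective.

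I expect the main obstacle to be the inner-product computation verifying that $\langle \zeta, a\zeta\rangle = 0$ for $a \in \ker\phi_X$, which requires carefully tracking the recursive definition $\langle x_1 \otimes \cdots, y_1 \otimes \cdots\rangle = \langle x_2 \otimes \cdots, \langle x_1, y_1\rangle \cdot y_2 \otimes \cdots\rangle$ down to the base case and then passing from simple tensors to the norm-closure $X^{\otimes m} = \overline{\operatorname{span}}\{\text{simple tensors}\}$ by linearity and continuity of the inner product. Everything else is a routine contrapositive packaging, and in fact the argument shows something slightly sharper: Condition~(S) forces every nonzero positive $a$ to be ``seen'' by arbitrarily high tensor powers, and any element of $\ker\phi_X$ is invisible to all of them.
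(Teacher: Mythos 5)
Your proof is correct, but it takes a genuinely different route from the paper's. You argue the contrapositive: a nonzero positive $a \in \ker\phi_X$ acts as zero on every tensor power $X^{\otimes m}$, $m \geq 1$ (since the left action only touches the first tensor factor), so $\langle \zeta, a\zeta\rangle = 0$ for every $\zeta \in X^{\otimes m}$, and Condition~(S) applied to $a$ with $\epsilon = \|a\|/2$ is violated. The paper instead gives a direct, quantitative argument: it shows $\|\phi_{n+1}(a)\| \leq \|\phi_n(a)\|$ for all $n$ by noting that $T \mapsto T \otimes 1$ is a $*$-homomorphism, then applies Condition~(S) to $a = b^*b$ and chains the inequalities $\|\phi_X(b)\|^2 \geq \|\phi_m(b)\|^2 \geq \|\phi_m(b)\zeta\|^2 = \|\langle\zeta, \phi_m(b^*b)\zeta\rangle\| > \|b\|^2 - \epsilon$ to conclude that $\phi_X$ is isometric. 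Your key observation (that $\phi_X(a)=0$ forces the left action of $a$ on $X^{\otimes m}$ to vanish) is precisely the degenerate case of the paper's inequality $\|\phi_m(a)\| \leq \|\phi_X(a)\|$, so the two proofs exploit the same structural fact about tensor powers; but your packaging is shorter and more elementary, needing no norm estimates at all, while the paper's yields the isometric property of $\phi_X$ explicitly (a distinction without much force, since any injective $*$-homomorphism between $C^*$-algebras is automatically isometric). One simplification to your write-up: the inner-product unwinding you flag as the ``main obstacle'' is unnecessary. Once you know $a \cdot \zeta = 0$ in $X^{\otimes m}$ (which holds on simple tensors and extends by linearity and continuity, since $\phi_m(a)$ is a bounded operator vanishing on a dense subspace), you immediately get $\langle \zeta, a\zeta\rangle = \langle \zeta, 0\rangle = 0$; no recursion through the inductive definition of the inner product is needed.
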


\begin{proof}
For each $n \in \N$ let $\phi_n : A \to \mathcal{L} (X^{\otimes n})$ denote the $*$-homomorphism defined by $\phi_n(a) (x_1 \otimes \ldots \otimes x_n) := a\cdot x_1 \otimes \ldots x_n$.  Also define $h: \mathcal{L}(X^{\otimes n}) \to \mathcal{L} (X^{\otimes n+1})$ by $h(T) := T \otimes I$.  Then $h$ is a $*$-homomorphism, and hence $\| h(T) \| \leq \| T\|$ for all $T \in \mathcal{L}(X^{\otimes n})$.  Furthermore, for any $a \in A$ and $n \in \N$, we have $h (\phi_n(a)) = \phi_{n+1}(a)$.  Thus $\| \phi_{n+1} (a) \| \leq \| \phi_n(a) \|$.  By induction, we conclude that
$$ \| \phi_n(a) \| \leq \| \phi_1 (a) \| = \| \phi_X (a) \|  \qquad \text{ for all $a \in A$ and $n \in \N$.}$$
For any $b \in A$, let $a := b^* b \geq 0$.  For any $\epsilon >0$, Condition~(S) implies there exists $m \in \N$ and a nonreturning vector $\zeta \in X^{\otimes m}$ with $\| \zeta \| = 1$ and
$$ \| \langle \zeta, \phi_m(a) \zeta \rangle \| > \| a \| - \epsilon.$$
Since $\| \zeta \| = 1$, we have
\begin{align*}
 \| \phi_X(b) \|^2 &= \| \phi_1(b) \|^2 \geq \| \phi_m(b)
\|^2 \geq \| \phi_m(b) \zeta \|^2 = \| \langle \phi_m(b) \zeta, \phi_m(b) \zeta \rangle \| \\
&= \| \langle \zeta, \phi_m(b^*b) \zeta \rangle \| = \| \langle \zeta, \phi_m(a) \zeta \rangle \| > \| a \| - \epsilon = \| b^* b \| - \epsilon = \| b \|^2 - \epsilon.
\end{align*}
Since $\epsilon > 0$ was arbitrary, we deduce $\| \phi_X(b) \|^2 \geq \| b \|^2$ and $\| \phi_X(b) \| \geq \| b \|$.  Moreover, because $\phi$ is a $*$-homomorphism, and hence norm decreasing, we conclude $\| \phi_X(b) \| = \| b \|$, so that $\phi_X$ is isometric and hence also injective.
\end{proof}

\begin{remark} \label{S-implies-injective-rem}
Proposition~\ref{Condition-S-implies-injective-prop} shows that any $C^*$-correspondence satisfying Condition~(S) necessarily has injective left action.  The reader should keep this in mind, and note that in all forthcoming results the hypothesis that a $C^*$-correspondence satisfies Condition~(S) implicitly requires the $C^*$-correspondence to have injective left action.
\end{remark}

\begin{lemma} \label{major-technical-lem}
Let $X$ be a $C^*$-correspondence over a $C^*$-algebra $A$, and suppose that $X$ satisfies Condition~(S). Let $(\psi_X, \pi_A) : (X,A) \to \mathcal{O}_X$ be a universal covariant representation of $X$ in $\mathcal{O}_X$.   Also let $(\psi, \pi) : (X,A) \to B$ be an injective covariant representation of $X$ into a $C^*$-algebra $B$.  If $$x = \sum_{i=1}^N \psi_X^{\otimes n_i} (\xi_i) \psi_X^{\otimes m_i} (\eta_i)^*$$
is positive in $\mathcal{O}_X$, then
$$ y_0 := \sum_{n_i = m_i} \psi^{\otimes n_i} (\xi_i) \psi^{\otimes m_i} (\eta_i)^*$$
is positive in $B$,  and if $\rho := \rho_{(\psi, \pi)} : \mathcal{O}_X \to B$ denotes the $*$-homomorphism induced by $(\psi, \pi)$, then for all $\epsilon > 0$ there exists $a \in A$ and $b \in B$ with $\| a \| = \| y_0 \|$, $\| b \| \leq 1$, and 
$$\| b^*\rho(x)b - \pi (a) \| < \epsilon.$$
\end{lemma}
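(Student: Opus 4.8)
The plan is to exploit the gauge action, the conditional expectation $E$, and two successive compressions. The positivity of $y_0$ comes almost for free. Since $E=\int_{\T}\gamma_z\,dz$ and $\gamma_z\big(\psi_X^{\otimes n}(\xi)\psi_X^{\otimes m}(\eta)^*\big)=z^{\,n-m}\psi_X^{\otimes n}(\xi)\psi_X^{\otimes m}(\eta)^*$, integrating kills every term with $n_i\neq m_i$, so $E(x)=\sum_{n_i=m_i}\psi_X^{\otimes n_i}(\xi_i)\psi_X^{\otimes m_i}(\eta_i)^*$. Applying $\rho$ and using $\rho\circ\psi_X^{\otimes n}=\psi^{\otimes n}$ gives $y_0=\rho(E(x))$. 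As $x\geq 0$ and $E$ is positive, $E(x)\geq 0$, and since $\rho$ is a $*$-homomorphism, $y_0=\rho(E(x))\geq 0$.

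For the approximation statement I would first record the diagonal part as a single operator. With $d:=\max_i\max(n_i,m_i)$ and $k:=\sum_{n_i=m_i}\Theta_{\xi_i,\eta_i}\otimes 1^{d-n_i}\in\mathcal{C}_d$, the representation relations give $E(x)=\kappa_d^{\psi_X,\pi_A}(k)$ and $y_0=\kappa_d^{\psi,\pi}(k)$. Because both $(\psi_X,\pi_A)$ and $(\psi,\pi)$ are injective, the maps $\kappa_d$ are isometric, so $k\geq 0$ in $\Li(X^{\otimes d})$ and $\|k\|=\|y_0\|$. Since $k$ is positive, its norm is attained on vectors, $\|k\|=\sup_{\|w\|\leq 1}\|\langle w,kw\rangle\|$, so I may pick a unit vector $w\in X^{\otimes d}$ with $\|\langle w,kw\rangle\|>\|y_0\|-\delta$ and set $a_1:=\langle w,kw\rangle\geq 0$. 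I then apply Condition~(S) to $a_1$ (with $n=d$ and tolerance $\delta'$) to obtain an integer $M>d$ and a nonreturning vector $\zeta\in X^{\otimes M}$ with $\|\zeta\|=1$ and $\|\langle\zeta,a_1\zeta\rangle\|>\|a_1\|-\delta'$.

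The element I propose is the doubled compression $b:=\psi^{\otimes d}(w)\psi^{\otimes M}(\zeta)=\psi^{\otimes(d+M)}(w\otimes\zeta)$, which satisfies $\|b\|\leq\|w\|\,\|\zeta\|=1$. The purpose of compressing first by $\psi^{\otimes d}(w)$ is that, since $d\geq n_i,m_i$, it turns each monomial $\psi^{\otimes n_i}(\xi_i)\psi^{\otimes m_i}(\eta_i)^*$ (a creation followed by an annihilation) into an annihilation followed by a creation $\psi^{\otimes(d-n_i)}(p_i)^*\psi^{\otimes(d-m_i)}(q_i)$. The diagonal terms collapse, via the termwise identity $\psi^{\otimes d}(w)^*\kappa_d^{\psi,\pi}(k)\psi^{\otimes d}(w)=\pi(\langle w,kw\rangle)$, to $\pi(a_1)$, while the off-diagonal terms assemble into an element $z$ that is a sum of annihilation-then-creation monomials of degrees at most $d$. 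The crucial point is that an annihilation-then-creation product $\psi^{\otimes r}(p)^*\psi^{\otimes s}(q)$ with $s>r$ reduces to a pure creation $\psi^{\otimes(s-r)}(q')$ of degree $s-r\leq d<M$ (and the case $s<r$ reduces to a pure annihilation). Hence the second compression annihilates $z$: each summand of $\psi^{\otimes M}(\zeta)^*z\,\psi^{\otimes M}(\zeta)$ is of the form $\psi^{\otimes M}(\zeta)^*\psi^{\otimes(s-r)}(q')\psi^{\otimes M}(\zeta)$ (or its adjoint), which vanishes by the defining property of the nonreturning vector $\zeta$. Since $\psi^{\otimes M}(\zeta)^*\pi(a_1)\psi^{\otimes M}(\zeta)=\pi(\langle\zeta,a_1\zeta\rangle)$ by relations (i)--(ii), this yields the exact identity $b^*\rho(x)b=\pi(\langle\zeta,a_1\zeta\rangle)$.

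Finally I would adjust the norm. Setting $a_2:=\langle\zeta,a_1\zeta\rangle\geq 0$ we have $\|y_0\|-\delta-\delta'<\|a_2\|\leq\|y_0\|$, and since $b^*\rho(x)b=\pi(a_2)$ already lies in $\pi(A)$, I rescale: put $a:=(\|y_0\|/\|a_2\|)\,a_2$ (and $a=0=b$ in the trivial case $y_0=0$), so that $\|a\|=\|y_0\|$ and, using that $\pi$ is isometric, $\|b^*\rho(x)b-\pi(a)\|=\|a_2-a\|=\big|\,\|y_0\|-\|a_2\|\,\big|<\delta+\delta'$. Choosing $\delta,\delta'$ with $\delta+\delta'<\epsilon$ finishes the proof. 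The main obstacle, and the conceptual heart of the argument, is the off-diagonal vanishing: one must compress in the right order, first by $w$ to convert creation-then-annihilation monomials into annihilation-then-creation ones (which genuinely reduce to creation or annihilation operators of degree strictly below $M$), and only then invoke the nonreturning property of $\zeta$. Compressing $\rho(x)$ by $\zeta$ directly fails, because the two copies of $\zeta$ are consumed in the reduction and the nonreturning hypothesis is never triggered.
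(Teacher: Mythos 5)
Your proposal is correct and follows essentially the same route as the paper's proof: identify $y_0=\rho(E(x))$, pull $y_0$ back to a positive element of $\mathcal{C}_n$ via the injective (hence isometric) map $\kappa_n^{\psi,\pi}$, choose a unit vector nearly attaining its norm, invoke Condition~(S) to get a nonreturning $\zeta$, compress by $b=\psi^{\otimes n}(\xi)\psi^{\otimes m}(\zeta)$ so the off-diagonal terms die, and rescale. The only cosmetic differences are that you exhibit the positive preimage explicitly (deducing its positivity from injectivity of $\kappa_d$, where the paper instead uses a $v^*v$-factorization) and that you handle the $y_0=0$ edge case, which the paper glosses over.
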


\begin{proof}
Let $E : \mathcal{O}_X \to \mathcal{O}^\gamma_X$ denote the conditional expectation determined by the gauge action $\gamma$ on $\mathcal{O}_X$ and given by $E(x) := \int_\mathbb{T} \gamma_z(x) \, dz$.  Then 
\begin{align*}
y_0 &=  \sum_{n_i = m_i} \psi^{\otimes n_i} (\xi_i) \psi^{\otimes m_i} (\eta_i)^* = \sum_{n_i = m_i} (\rho \circ \psi_X)^{\otimes n_i} (\xi_i) (\rho \circ \psi_X)^{\otimes m_i} (\eta_i)^* \\
&= \rho \left ( \sum_{n_i = m_i} \psi_X^{\otimes n_i} (\xi_i) \psi_X^{\otimes m_i} (\eta_i)^* \right) = \rho ( E (x)).
\end{align*}
Since $E$ is a conditional expectation, $E$ is completely positive (as well as positive). Likewise, the $*$-homomorphism $\rho$ is positive.  Since $x$ is positive in $\mathcal{O}_X$, it follows that $y_0 = \rho ( E (x))$ is positive in $B$.

Given $\epsilon > 0$, set $n := \max \{ n_1, \ldots n_N, m_1, \ldots, m_N \}$.  By \cite[Proposition~4.6]{FMR} the $*$-homomorphism $\kappa_n^{\psi,\pi} : \mathcal{C}_n \to B$ is injective and hence isometric.  Therefore, since $y_0 =  \sum_{n_i=m_i} \psi^{\otimes n_i} (\xi_i) \psi^{\otimes m_i} (\eta_i)^* \in \im \kappa_n^{\psi,\pi}$, there exists $T \in \mathcal{C}_n \subseteq \mathcal{L} (X^{\otimes n})$ with $\kappa_n^{\psi,\pi} (T) = y_0$ and $\| T \| = \| y_0 \|$.  Moreover, since $y_0$ is positive in $B$, we may choose $T$ positive in $\mathcal{C}_n$: since $y_0 \in  \im \kappa_n^{\psi,\pi} \subseteq B$ is positive in $B$, and hence positive in $\im \kappa_n^{\psi,\pi}$, we may write $y_0 = v^*v$ for $v \in \im \kappa_n^{\psi,\pi}$, choose $S \in \mathcal{C}_n$ with $\kappa_n^{\psi,\pi}(S) = v$, and let $T := S^*S \geq 0$.  Then $\kappa_n^{\psi,\pi}(T) = \kappa_n^{\psi,\pi}(S^*S) = \kappa_n^{\psi,\pi}(S)^*\kappa_n^{\psi,\pi}(S) = v^*v = y_0$.

Since $\| T \| = \| y_0 \|$ and $T$ is positive, there exists $\xi \in X^{\otimes n}$ with $\| \xi \| = 1$ such that $g := \langle \xi, T \xi \rangle$ is a positive element of $A$ with $\| g \| > \| y_0 \| -\epsilon/2$.  When $n_i > m_i$ we have
$$\psi^{\otimes n}(\xi)^* \psi^{\otimes n_i} (\xi_i)  \psi^{\otimes m_i} (\eta_i)^* \psi^{\otimes n}(\xi) =  \psi^{\otimes n'_i}(\xi'_i)$$
for some $\xi'_i \in X^{\otimes n'_i}$, where $n'_i := n_i - m_i$.  Similarly, when $n_i < m_i$ we have
$$\psi^{\otimes n}(\xi)^* \psi^{\otimes n_i} (\xi_i)  \psi^{\otimes m_i} (\eta_i)^* \psi^{\otimes n}(\xi) =  \psi^{\otimes m'_i}(\eta'_i)^*$$
for some $\eta'_i \in X^{\otimes m'_i}$, where $m'_i := m_i - n_i$.   Also, when $n_i = m_i$ we have
\begin{align*}
\psi^{\otimes n}(\xi)^* &\left( \sum_{n_i = m_i} \psi^{\otimes n_i} (\xi_i) \psi^{\otimes m_i} (\eta_i)^* \right) \psi^{\otimes n}(\xi) = \psi^{\otimes n}(\xi)^* y_0 \psi^{\otimes n}(\xi) \\
&= \psi^{\otimes n}(\xi)^* \kappa_n^{\psi, \pi} (T) \psi^{\otimes n}(\xi) = \psi^{\otimes n}(\xi)^* \psi^{\otimes n}(T\xi) = \pi ( \langle \xi, T\xi \rangle) = \pi (g).
\end{align*}
Combining these three equations yields
\begin{align}
\psi^{\otimes n} (\xi)^* \rho(x) \psi^{\otimes n} (\xi) &= \psi^{\otimes n} (\xi)^* \left( \sum_{i=1}^N \psi^{\otimes n_i} (\xi_i) \psi^{\otimes m_i} (\eta_i)^* \right) \psi^{\otimes n} (\xi)  \label{hit-each-side-eq} \\
&= \pi (g) + \sum_{n_i > m_i} \psi^{\otimes n'_i} (\xi'_i) + \sum_{n_i < m_i} \psi^{\otimes m'_i} (\eta'_i)^*. \notag
\end{align}
Since $g \geq 0$, by Condition~(S) there exists $m > n$ and a nonreturning path $\zeta \in X^{\otimes m}$ with $\| \zeta \| = 1$ and $\| \langle \zeta, g \zeta \rangle \| > \| g \| - \epsilon/2$.  

Let $c := \langle \zeta, g \zeta \rangle \in A$.  Then 
$$ \| c \| = \|  \langle \zeta, g \zeta \rangle \| > \| g \| - \epsilon / 2 > (\| y_0 \| - \epsilon/2) - \epsilon/2 = \| y_0 \| - \epsilon.$$
In addition, 
\begin{align*}
\| c \| &= \| \langle \zeta, g \zeta \rangle \| \leq  \| g \|   \| \langle \zeta, \zeta \rangle \| = \| g \| = \| \langle \xi, T \xi \rangle \| \\
&\leq \| T \| \, \| \langle \zeta, \zeta \rangle \| =  \| T \| = \| y_0 \| <  \| y_0 \| + \epsilon.
\end{align*}
These two inequalities show that 
$$ \Big| \| c \| - \| y_0 \| \Big| < \epsilon.$$
Let $b := \psi^{\otimes n} (\xi) \psi^{\otimes m} (\zeta)$.  Then $\| b \| \leq \| \psi^{\otimes n} (\xi) \| \, \| \psi^{\otimes m} (\zeta) \| = \| \xi \| \, \| \zeta \| = 1 \cdot 1 = 1$.  In addition, 
\begin{align*}
b^* \rho(x) b &= \psi^{\otimes m} (\zeta)^* \psi^{\otimes n} (\xi)^*  \rho(x)  \psi^{\otimes n} (\xi) \psi^{\otimes m} (\zeta) \\
&= \psi^{\otimes m} (\zeta)^* \left(  \pi (g) + \sum_{n_i > m_i} \psi^{\otimes n'_i} (\xi'_i) + \sum_{n_i < m_i} \psi^{\otimes m'_i} (\eta'_i)^* \right) \psi^{\otimes m} (\zeta) \ \text{(by \eqref{hit-each-side-eq}) } \\
&= \psi^{\otimes m} (\zeta)^* \pi (g)  \psi^{\otimes m} (\zeta) \qquad \text{(since $\zeta$ is a nonreturning vector)} \\
&= \psi^{\otimes m} (\zeta)^* \psi^{\otimes m} (g \zeta) \\
&= \pi ( \langle \zeta, g \zeta \rangle )\\
&= \pi (c).
\end{align*}
Let $ a:= \frac{ \| y_0 \| }{ \| c \| } c$.  Then $\| a \| = \| y_0 \|$, and 
\begin{align*}
\| b^* \rho(x) b - \pi (a) \| &= \| \pi (c) - \pi (a) \| = \| \pi (c-a) \| \leq \| c - a \| = \left\| c - \frac{ \| y_0 \| }{ \| c \| } c \right\| \\
&=  \left| 1 - \frac{ \| y_0 \| }{ \| c \| } \right| \| c \| = \Big| \| c \| - \| y_0 \| \Big| < \epsilon. \qedhere
\end{align*}
\end{proof}

\begin{lemma} \label{dense-positive-approx-lem}
Let $A_0$ be a dense $*$-subalgebra of a $C^*$-algebra $A$.  If $x \in A$ and $x \geq 0$, then for every $\epsilon >0$ there exists $x_0 \in A_0$ with $x_0 \geq 0$ and $\| x - x_0 \| < \epsilon$.
\end{lemma}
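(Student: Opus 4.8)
The plan is to approximate not $x$ itself but its positive square root, exploiting the fact that $A_0$, being a $*$-subalgebra, is closed under $z \mapsto z^*$ and $(w,z)\mapsto wz$, so that $z^*z$ automatically lands in $A_0$ and is automatically positive. The point is that naively choosing $y \in A_0$ with $\|x-y\|$ small gives no control over positivity of $y$, so the square-root device is the essential idea.

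First I would set $s := x^{1/2}$, the positive square root of $x$ obtained by continuous functional calculus inside the $C^*$-subalgebra generated by $x$; thus $s \in A$ is self-adjoint with $s \geq 0$, $s^2 = x$, and $\|s\| = \|x\|^{1/2}$. Given $\epsilon > 0$, I would fix a tolerance $\delta > 0$ (to be specified at the end) and use density of $A_0$ to choose $z \in A_0$ with $\|z - s\| < \delta$. I then set $x_0 := z^*z$. Because $A_0$ is a $*$-subalgebra we have $z^* \in A_0$ and hence $x_0 = z^*z \in A_0$, while $x_0 = z^*z \geq 0$ by construction.

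It remains to estimate $\|x - x_0\|$. Writing $x = s^2 = s^*s$ (using $s = s^*$) and telescoping,
$$ s^*s - z^*z = s^*(s - z) + (s - z)^* z, $$
so that, by submultiplicativity and $\|z\| \leq \|s\| + \delta$,
$$ \|x - x_0\| \leq \|s\|\,\|s - z\| + \|s - z\|\,\|z\| \leq \|s\|\delta + \delta(\|s\| + \delta) = 2\|s\|\delta + \delta^2. $$
Choosing $\delta > 0$ small enough that $2\|s\|\delta + \delta^2 < \epsilon$ (for instance any $\delta \in (0,1]$ with $\delta < \epsilon/(2\|s\|+1)$, since then $2\|s\|\delta + \delta^2 \leq (2\|s\|+1)\delta < \epsilon$) yields $\|x - x_0\| < \epsilon$, completing the argument.

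There is no genuine obstacle here; the only point requiring a moment's care is that the positive square root $s$ genuinely lies in $A$ even when $A$ is nonunital, which holds because $x^{1/2}$ is computed from a continuous function vanishing at $0$ and therefore belongs to $C^*(x) \subseteq A$.
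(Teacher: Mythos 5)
Your proof is correct and follows essentially the same route as the paper's: both factor $x$ as $v^*v$ (you take $v = x^{1/2}$ explicitly), approximate the factor by some $z \in A_0$ using density, set $x_0 := z^*z$ so that positivity and membership in $A_0$ are automatic, and conclude with the same telescoping estimate $s^*s - z^*z = s^*(s-z) + (s-z)^*z$. Your handling of the nonunital case and the uniform treatment of $x = 0$ are fine; no gaps.
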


\begin{proof}
If $x = 0$, the result is trivial, so assume $x \neq 0$.  Since $x \geq 0$, we may write $x = v^*v$ for some nonzero $v \in A$.  Using the fact that $A_0$ is dense in $A$, choose $v_0 \in A_0$ with 
$$\| v - v_0 \| \leq \min \left\{ \frac{\epsilon}{3 \| v \|}, \sqrt{\frac{\epsilon}{3}} \right\}.$$
Let $x_0 := v_0^* v_0 \in A_0$.  Then $x_0 \geq 0$, and 
\begin{align*}
\| x - x_0 \| &= \| v^*v - v_0^* v_0 \| \leq \| v^* v - v^* v_0 \| + \| v^* v_0 - v_0^* v_0 \| \\
&\leq \| v^* \| \ \| v - v_0 \| + \| v^* - v_0^* \| \ \| v_0 \|  = \| v \| \ \| v - v_0 \| + \| v - v_0 \| \ \| v_0 \| \\
& \leq  \| v \| \frac{\epsilon}{3 \| v \|} + \| v - v_0 \| \ \| v_0 - v + v \|  \leq \epsilon/3+ \| v - v_0 \| \ \left( \| v_0 - v\|  + \| v \| \right) \\
& \leq \epsilon/3+ \| v - v_0 \|^2   + \| v \| \ \| v - v_0 \| \leq \epsilon / 3 + \left( \sqrt{\frac{\epsilon}{3}} \right)^2 +  \| v \| \frac{\epsilon}{3 \| v \|} \\
& = \epsilon/3 + \epsilon/3 + \epsilon/3 = \epsilon. \qedhere
\end{align*}
\end{proof}

We are now in a position to prove our main result.

\begin{theorem}[Cuntz-Krieger Uniqueness for Cuntz-Pimsner Algebras] \label{CKUT-for-CP-algebras-thm}
Let $X$ be a $C^*$-correspondence over a $C^*$-algebra $A$, and suppose that $X$ satisfies Condition~(S).  Also let $(\psi_X, \pi_A) : (X,A) \to \mathcal{O}_X$ be a universal covariant representation of $X$ into $\mathcal{O}_X$.  If $\rho : \mathcal{O}_X \to B$ is a $*$-homomorphism from $\mathcal{O}_X$ into a $C^*$-algebra $B$ with the property that the restriction $\rho |_{\pi_A(A)}$ is injective, then $\rho$ is injective.
\end{theorem}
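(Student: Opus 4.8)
The plan is to prove the theorem by establishing the norm estimate $\|\rho(x)\| \geq \|E(x)\|$ for every positive $x \in \mathcal{O}_X$, where $E : \mathcal{O}_X \to \mathcal{O}_X^\gamma$ is the faithful conditional expectation coming from the gauge action, and then invoking faithfulness of $E$. To set up, write $\psi := \rho \circ \psi_X$ and $\pi := \rho \circ \pi_A$. Since Condition~(S) forces the left action to be injective (Proposition~\ref{Condition-S-implies-injective-prop}), the map $\kappa^{\psi_X,\pi_A}$ is injective onto $\mathcal{O}_X^\gamma$ by \cite[Corollary~4.9]{FMR}, and in particular $\pi_A$ is injective; combined with the hypothesis that $\rho|_{\pi_A(A)}$ is injective, this makes $\pi = \rho \circ \pi_A$ injective, so $(\psi,\pi)$ is an injective covariant representation of $X$ into $B$ whose induced homomorphism $\rho_{(\psi,\pi)}$ is precisely $\rho$.

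First I would record that $\rho$ is isometric on the fixed point algebra $\mathcal{O}_X^\gamma$. Because $\pi$ is injective, \cite[Corollary~4.8]{FMR} gives that $\kappa^{\psi,\pi} : \mathcal{C} \to B$ is injective. A direct check on generators shows $\rho \circ \kappa^{\psi_X,\pi_A} = \kappa^{\psi,\pi}$ (both send a generator $\Theta_{x,y} \in \K(X^{\otimes n})$ to $\psi^{\otimes n}(x)\psi^{\otimes n}(y)^*$), and since $\kappa^{\psi_X,\pi_A}$ maps $\mathcal{C}$ isomorphically onto $\mathcal{O}_X^\gamma$, it follows that $\rho$ restricted to $\mathcal{O}_X^\gamma$ is injective, hence isometric. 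In particular $\|\rho(E(w))\| = \|E(w)\|$ for every $w \in \mathcal{O}_X$.

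The core of the argument is the estimate for positive finite sums, which is exactly where Condition~(S) does its work through Lemma~\ref{major-technical-lem}. Let $x' = \sum_{i=1}^N \psi_X^{\otimes n_i}(\xi_i)\psi_X^{\otimes m_i}(\eta_i)^*$ be positive in $\mathcal{O}_X$. Applying Lemma~\ref{major-technical-lem} to $(\psi,\pi)$ produces, for each $\epsilon > 0$, elements $a \in A$ and $b \in B$ with $\|a\| = \|y_0\|$, $\|b\| \leq 1$, and $\|b^*\rho(x')b - \pi(a)\| < \epsilon$, where the proof of that lemma identifies $y_0 = \rho(E(x'))$. Using $\|b\| \leq 1$, the isometry of the injective map $\pi$, and the previous paragraph, I would estimate
$$\|\rho(x')\| \geq \|b^*\rho(x')b\| \geq \|\pi(a)\| - \epsilon = \|a\| - \epsilon = \|\rho(E(x'))\| - \epsilon = \|E(x')\| - \epsilon,$$
and let $\epsilon \to 0$ to obtain $\|\rho(x')\| \geq \|E(x')\|$.

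To pass to arbitrary positive elements, note that the finite sums above form a dense $*$-subalgebra of $\mathcal{O}_X$, so given $x \geq 0$ and $\epsilon > 0$, Lemma~\ref{dense-positive-approx-lem} yields a positive finite sum $x'$ with $\|x - x'\| < \epsilon$; since $E$ and $\rho$ are norm-decreasing, combining the estimate on $x'$ with these approximations gives $\|\rho(x)\| \geq \|E(x)\| - 2\epsilon$, whence $\|\rho(x)\| \geq \|E(x)\|$ for all $x \geq 0$. Finally, if $\rho$ were not injective its kernel would contain some $x \neq 0$ with $x \geq 0$; writing $x = (x^{1/2})^* x^{1/2}$, the chain $\|E((x^{1/2})^*x^{1/2})\| = \|E(x)\| \leq \|\rho(x)\| = 0$ forces $E((x^{1/2})^*x^{1/2}) = 0$, so faithfulness of $E$ gives $x^{1/2} = 0$ and thus $x = 0$, a contradiction. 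I expect the main obstacle to be purely the interlocking of norms in the displayed estimate: recognizing that Lemma~\ref{major-technical-lem} transports the diagonal norm $\|\rho(E(x'))\|$ into the isometric copy $\pi(A)$ so that it can be read off as $\|a\|$, while the genuinely hard analytic content — nonreturning vectors and Condition~(S) — has already been absorbed into that lemma.
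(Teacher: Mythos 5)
Your proposal is correct and takes essentially the same route as the paper's own proof: injectivity (hence isometry) of $\rho$ on the fixed-point algebra via $\rho \circ \kappa^{\psi_X,\pi_A} = \kappa^{\psi,\pi}$, the norm estimate for positive finite sums via Lemma~\ref{major-technical-lem} combined with the density argument of Lemma~\ref{dense-positive-approx-lem}, and faithfulness of the conditional expectation $E$ to conclude. The only cosmetic difference is that you phrase the key inequality as $\|E(x)\| \leq \|\rho(x)\|$ rather than the paper's $\|\rho(E(x))\| \leq \|\rho(x)\|$, which is equivalent given the isometry of $\rho$ on $\mathcal{O}_X^\gamma$.
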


\begin{proof}
Let $\gamma$ denote the gauge action on $\mathcal{O}_X$, and let $E : \mathcal{O}_X \to \mathcal{O}_X^\gamma$ denote the conditional expectation given by $$E (x) := \int_{\T} \gamma_z(x) \, dz.$$
We shall show the following statements hold:
\begin{itemize}
\item[(a)] $\rho$ is injective on the fixed point algebra $\mathcal{O}_X^\gamma$, and
\item[(b)] $\| \rho (E(x)) \| \leq \| \rho (x) \|$ whenever $x \in \mathcal{O}_X$ and $x \geq 0$.
\end{itemize}
To see (a), let $\psi := \rho \circ \psi_X$ and $\pi := \rho \circ \pi_A$.   Then $\rho = \rho_{(\psi, \pi)}$; that is, $\rho$ equal to the $*$-homomorphism induced by the representation $(\psi, \pi)$.  For each $n \in \N$, we have 
$\kappa_n^{\psi, \pi} = \rho \circ \kappa_n^{\psi_X, \pi_A}$, and hence $\kappa^{\psi, \pi} = \rho \circ \kappa^{\psi_X, \pi_A}$.  Since $\kappa^{\psi, \pi}$ is injective by \cite[Corollary~4.8]{FMR}, it follows that $\rho$ is injective on $\im \kappa^{\psi_X, \pi_A} = \mathcal{O}_X^\gamma$, so (a) holds.

To see (b), note that since 
$$ \mathcal{O}_X = \clspan \{ \psi_X^{\otimes n} (\xi) \psi_X^{\otimes m} (\eta)^* : \xi \in X^{\otimes n}, \eta \in X^{\otimes m}, \text{ and } n,m \geq 0 \},$$
Lemma~\ref{dense-positive-approx-lem} implies that any positive element in $\mathcal{O}_X$ may be approximated by a positive element from $\operatorname{span} \{ \psi_X^{\otimes n} (\xi) \psi_X^{\otimes m} (\eta)^* : \xi \in X^{\otimes n}, \eta \in X^{\otimes m}, \text{ and } n,m \geq 0 \}$.  Hence it suffices to prove (b) when $x$ is positive and has the form
$$ x = \sum_{i=1}^N \psi_X^{\otimes n_i} (\xi_i) \psi_X^{\otimes m_i} (\eta_i)^*.$$
Keeping
the notation $(\psi, \pi) = (\rho \circ \psi_X, \rho \circ \pi_A)$, if we let 
$$y := \rho(x) = \sum_{i=1}^N  \psi^{\otimes n_i} (\xi_i) \psi^{\otimes m_i}(\eta_i)^*$$ and
$$y_0 := \sum_{n_i = m_i}  \psi^{\otimes n_i} (\xi_i) \psi^{\otimes m_i}(\eta_i)^*,$$
then Lemma~\ref{major-technical-lem} shows that for all $\epsilon > 0$ there exists $a \in A$ and $b \in B$ such that $\| a \| = \| y_0 \|$, $\|b\| \leq 1$, and $\|b^* y
b - \pi(a) \| < \epsilon$.  Thus
\begin{align*}
\| \rho (E(x)) \| & = \left\| \rho \left( \sum_{n_l = m_l}  \psi_X^{\otimes n_l} (\xi_l)
\psi_X^{\otimes m_l}(\eta_l)^* \right ) \right\| = \left\| \sum_{n_l = m_l}  \psi^{\otimes n_l} (\xi_l)
\psi^{\otimes m_l}(\eta_l)^* \right\| \\
&= \| y_0 \| = \| a \| = \| \rho (\pi_A (a))
\| = \| \pi(a) \|  \leq \| b^* y b \| + \| \pi(a) - b^* y b \|  \\
&\leq \| b^* \| \, \|y \| \, \| b \| + \epsilon \leq \| y \| +
\epsilon = \| \rho(x) \| + \epsilon.
\end{align*}
Since this inequality holds for all $\epsilon > 0$, we have that $\| \rho (E(x)) \|
\leq \| \rho (x) \|$.  Hence (b) holds.

Finally, given (a) and (b) we see that whenever $\rho (x) = 0$, we have $\rho(x^*x) = 0$ and (b) implies that $\rho (E(x^*x)) = 0$.  Thus $E (x^*x) = 0$ by (a), and since $E$ is a faithful conditional expectation, this implies $x = 0$.  Thus $\rho$ is injective.
\end{proof}

\begin{corollary} \label{CK-for-CP-cor}
Let $X$ be a $C^*$-correspondence over a $C^*$-algebra $A$, and suppose that $X$ satisfies Condition~(S).  
If $(\psi, \pi) : (X,A) \to B$ is a covariant representation with the property that $\pi$ is injective, then the induced $*$-homomorphism $\rho_{(\psi, \pi)} : \mathcal{O}_X \to B$ is injective.
\end{corollary}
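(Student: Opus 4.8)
The plan is to deduce this corollary directly from Theorem~\ref{CKUT-for-CP-algebras-thm}. The two statements differ only in their hypothesis on $\rho_{(\psi,\pi)}$: the theorem assumes that the restriction $\rho|_{\pi_A(A)}$ is injective, whereas the corollary assumes the more concrete condition that $\pi$ itself is injective. The key observation is that, for the induced homomorphism $\rho_{(\psi,\pi)}$, these two conditions are equivalent, so the corollary is simply a repackaging of the theorem once this equivalence is noted.

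Concretely, I would first set $\rho := \rho_{(\psi,\pi)}$ and recall from the definition of the universal covariant representation that $\rho$ is characterized by the identities $\psi = \rho \circ \psi_X$ and $\pi = \rho \circ \pi_A$; in particular $\rho(\pi_A(a)) = \pi(a)$ for every $a \in A$. Next I would verify that $\rho|_{\pi_A(A)}$ is injective: if $\pi_A(a) \in \pi_A(A)$ satisfies $\rho(\pi_A(a)) = 0$, then $\pi(a) = \rho(\pi_A(a)) = 0$, and since $\pi$ is injective by hypothesis we get $a = 0$, whence $\pi_A(a) = 0$. Thus $\rho$ is injective on $\pi_A(A)$.

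Finally, since $X$ satisfies Condition~(S) and $\rho|_{\pi_A(A)}$ is injective, Theorem~\ref{CKUT-for-CP-algebras-thm} applies and yields that $\rho = \rho_{(\psi,\pi)}$ is injective, which is the desired conclusion. I do not expect any genuine obstacle here: all of the analytic content—the conditional expectation argument, the approximation via nonreturning vectors supplied by Condition~(S), and the faithfulness of $E$—has already been carried out in the proof of Theorem~\ref{CKUT-for-CP-algebras-thm}. The corollary reduces to the one-line translation of the hypothesis ``$\pi$ is injective'' into the hypothesis ``$\rho|_{\pi_A(A)}$ is injective'' via the relation $\pi = \rho \circ \pi_A$.
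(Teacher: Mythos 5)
Your proposal is correct and matches the paper's intended argument: the corollary is stated there without proof precisely because it follows from Theorem~\ref{CKUT-for-CP-algebras-thm} by the one-line observation you make, namely that $\pi = \rho_{(\psi,\pi)} \circ \pi_A$ forces $\rho_{(\psi,\pi)}|_{\pi_A(A)}$ to be injective whenever $\pi$ is. No gaps; your verification that $\rho(\pi_A(a)) = 0$ implies $\pi(a)=0$, hence $a=0$, hence $\pi_A(a)=0$, is exactly the required reduction.
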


%%%%%%%%%%%%%%%%%%%%%%%%%%%%%%%%%%%%%%%%%%%%%%%%%%%%%%%%
\section{Simplicity of Cuntz-Pimsner algebras} \label{simplicity-sec}
%%%%%%%%%%%%%%%%%%%%%%%%%%%%%%%%%%%%%%%%%%%%%%%%%%%%%%%%

All ideals in $C^*$-algebras that we consider will be closed and two-sided.  Let $X$ be a $C^*$-correspondence over $A$.  For any ideal $I$ of $A$, we define $$IX := \clspan \{ ax : a \in I \text{ and } x \in X \}$$ and $$XI := \clspan \{ xa : a \in I \text{ and } x \in X \}.$$  We also define $$X_I := \{ x \in X : \langle y, x \rangle \in I \text{ for all $y \in X$} \}.$$  

\begin{lemma}
Let $X$ be a $C^*$-correspondence over $A$.  For any ideal $I$ of $A$ we have $XI = X_I$.
\end{lemma}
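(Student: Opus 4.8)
The plan is to prove the two inclusions $XI \subseteq X_I$ and $X_I \subseteq XI$ separately, the first being routine and the second requiring an approximate-identity argument.

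For the inclusion $XI \subseteq X_I$, I would first record that $X_I$ is a closed subspace of $X$. It is plainly a linear subspace, and it is closed because if $x_n \to x$ with each $x_n \in X_I$, then for every $y \in X$ we have $\langle y, x_n \rangle \to \langle y, x \rangle$; since each $\langle y, x_n \rangle$ lies in the closed ideal $I$, so does the limit $\langle y, x \rangle$, whence $x \in X_I$. Given this, it suffices to verify the inclusion on the generators $xa$ of $XI$, where $a \in I$ and $x \in X$: for any $y \in X$ we have $\langle y, xa \rangle = \langle y, x \rangle a \in I$ because $I$ is an ideal, so $xa \in X_I$. Passing to closed spans then gives $XI \subseteq X_I$.

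For the reverse inclusion $X_I \subseteq XI$, the key idea is to approximate a vector $x \in X_I$ by the net $(x e_\lambda)$, where $(e_\lambda)$ is an approximate identity of positive contractions for $I$ (which exists since the closed two-sided ideal $I$ is itself a $C^*$-algebra). The crucial input is that, taking $y = x$ in the definition of $X_I$, the element $b := \langle x, x \rangle$ lies in $I$. I would then compute
$$\langle x - x e_\lambda, \, x - x e_\lambda \rangle = b - b e_\lambda - e_\lambda b + e_\lambda b e_\lambda,$$
and observe that, because $b \in I$ and $(e_\lambda)$ is an approximate identity for $I$, each of $b e_\lambda$, $e_\lambda b$, and $e_\lambda b e_\lambda$ converges to $b$, so the right-hand side converges to $0$. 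Consequently $\|x - x e_\lambda\|^2 = \|\langle x - x e_\lambda, x - x e_\lambda \rangle\| \to 0$, i.e.\ $x e_\lambda \to x$. Since each $x e_\lambda$ lies in $XI$ (as $e_\lambda \in I$) and $XI$ is closed by definition, we conclude $x \in XI$.

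The main obstacle I anticipate is precisely this convergence $x e_\lambda \to x$: everything depends on the fact that $\langle x, x \rangle$ itself lies in $I$ (so that the approximate identity \emph{of} $I$, rather than of $A$, suffices), together with the convergence $e_\lambda b e_\lambda \to b$, which I would justify by the estimate $\|e_\lambda b e_\lambda - b\| \leq \|e_\lambda\| \, \|b e_\lambda - b\| + \|e_\lambda b - b\|$. Once this computation is in hand, combining the two inclusions yields $XI = X_I$.
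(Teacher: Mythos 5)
Your proof is correct, and it diverges from the paper's argument in an instructive way on the nontrivial inclusion. For $XI \subseteq X_I$ you and the paper do essentially the same thing: the paper takes a generic element of $XI$ as a limit of sums $\sum_i x_i a_i$ and pushes the inner product through the limit using closedness of $I$, while you phrase this as ``check on generators, then use that $X_I$ is a closed subspace'' --- the same computation $\langle y, xa\rangle = \langle y,x\rangle a \in I$ in both cases. The real difference is in $X_I \subseteq XI$: the paper disposes of it in one line by observing that $X_I$ is a right Hilbert $I$-module and invoking the Cohen--Hewitt factorization theorem to get $X_I = X_I I \subseteq XI$, whereas you prove it by hand with an approximate identity $(e_\lambda)$ for $I$, using the key observation that $\langle x,x\rangle \in I$ for $x \in X_I$ to show $\|x - xe_\lambda\|^2 = \|\langle x,x\rangle - \langle x,x\rangle e_\lambda - e_\lambda\langle x,x\rangle + e_\lambda\langle x,x\rangle e_\lambda\| \to 0$. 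Your route is more elementary and self-contained: it avoids citing a nontrivial factorization theorem, at the cost of a short computation. Note also that your argument proves slightly less than Cohen--Hewitt (you get $X_I \subseteq \overline{X_I\, I}$ rather than the exact factorization $X_I = X_I I$), but since $XI$ is defined as a \emph{closed} span, this weaker statement is all the lemma requires. Both proofs are complete and correct.
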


\begin{proof}
Since $X_I$ has a right Hilbert $I$-module structure, by the Cohen-Hewitt factorization theorem we have $X_I= X_I I$, which is a subset of $XI$. 

For the reverse inclusion, let $\xi \in XI := \clspan \{ xa : a \in I \text{ and } x \in X \}$.  Then $\xi = \lim_{n \to \infty} \sum_{i=1}^{N_n} x_i^n a_i^n$ with each $x_i^n \in X$ and each $a_i^n \in I$.  For any $y \in X$ we have 
$$\langle y, \xi \rangle =
\langle y,  \lim_{n \to \infty} \sum_{i=1}^{N_n}  x_i^n  a_i^n \rangle =
\lim_{n \to \infty} \langle y,   \sum_{i=1}^{N_n}  x_i^n  a_i^n \rangle =
\lim_{n \to \infty} \sum_{i=1}^{N_n} \langle y, x_i^n \rangle a_i^n \in I.$$
Thus $\xi \in X_I$, and $XI \subseteq X_I$.  
\end{proof}

Let $X$ be a $C^*$-correspondence over a $C^*$-algebra $A$.  If $I$ is an ideal in $A$, we say $I$ is \emph{hereditary} if $\phi(I) X \subseteq X I$.  We also say $I$ is \emph{saturated} if for all $a \in J_X$, $\phi(a) X \subseteq XI$ implies $a \in I$.  We call an ideal \emph{invariant} when it is both saturated and hereditary.

The following lemma is well-known among researchers studying Cuntz-Pimsner algebras.  It can be extracted as a special case of \cite[Proposition 8.2]{Kat2}, but doing so requires understanding of definitions and preliminary results in \cite{Kat2}.  For the convenience of the reader, and the benefit of the Cuntz-Pimsner literature, we provide a direct and self-contained proof for our special case.

\begin{lemma} \label{pullback-to-invariant-lem}
Let $X$ be a $C^*$-correspondence over a $C^*$--algebra $A$, and suppose that $(\psi_X, \pi_A) : (X,A) \to \mathcal{O}_X$ is a universal covariant representation.  If $\mathcal{I}$ is an ideal in $\mathcal{O}_X$, then $\pi_A^{-1} (\mathcal{I})$ is an invariant ideal in $A$.
\end{lemma}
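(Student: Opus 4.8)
The plan is to set $I := \pi_A^{-1}(\mathcal{I})$, which is at once a closed two-sided ideal of $A$ since $\pi_A$ is a $*$-homomorphism and $\mathcal{I}$ is a closed ideal. The content of the lemma is then to verify that $I$ is both hereditary and saturated, and the main tools throughout will be the covariant representation relations (i)--(iii), the identity $XI = X_I$ from the preceding lemma, and the covariance relation $\pi_A^{(1)}(\phi(a)) = \pi_A(a)$ for $a \in J_X$. Before splitting into the two cases, I would record one auxiliary fact: $\psi_X(X_I) \subseteq \mathcal{I}$. Indeed, if $x \in X_I = XI$, then $x = \lim \sum_j x_j c_j$ with $c_j \in I$, so by relation (iii) the element $\psi_X(x)$ is a limit of sums $\sum_j \psi_X(x_j)\pi_A(c_j)$; each $\pi_A(c_j) \in \mathcal{I}$, and since $\mathcal{I}$ is a closed ideal we conclude $\psi_X(x) \in \mathcal{I}$.

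For hereditariness I must show $\phi(a)\xi \in XI = X_I$ whenever $a \in I$ and $\xi \in X$, i.e.\ that $\langle y, \phi(a)\xi\rangle \in I$ for every $y \in X$. This is a direct computation: applying relation (i) and then (ii),
$$\pi_A(\langle y, \phi(a)\xi\rangle) = \psi_X(y)^*\psi_X(\phi(a)\xi) = \psi_X(y)^*\pi_A(a)\psi_X(\xi),$$
which lies in $\mathcal{I}$ because $\pi_A(a) \in \mathcal{I}$ and $\mathcal{I}$ is an ideal. Hence $\langle y, \phi(a)\xi\rangle \in \pi_A^{-1}(\mathcal{I}) = I$ for all $y$, so $\phi(a)\xi \in X_I$, and taking closed spans gives $\phi(I)X \subseteq XI$ as required.

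For saturation suppose $a \in J_X$ with $\phi(a)X \subseteq XI = X_I$; I must deduce $a \in I$. Since $a \in J_X$ we have $b := \phi(a) \in \K(X)$, and covariance reduces the goal to showing $\pi_A^{(1)}(b) \in \mathcal{I}$ (for then $\pi_A(a) = \pi_A^{(1)}(\phi(a)) \in \mathcal{I}$). The bridge from the range hypothesis to membership is the identity $b\,\Theta_{\eta,\zeta} = \Theta_{b\eta,\zeta}$, which gives $\pi_A^{(1)}(b\,\Theta_{\eta,\zeta}) = \psi_X(b\eta)\psi_X(\zeta)^*$; since $b\eta \in X_I$, the auxiliary fact yields $\psi_X(b\eta) \in \mathcal{I}$, so $\pi_A^{(1)}(bk) \in \mathcal{I}$ for every $k$ in the dense $*$-subalgebra $D := \operatorname{span}\{\Theta_{\eta,\zeta} : \eta,\zeta \in X\}$ of $\K(X)$. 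Finally, fixing an approximate unit $(u_\lambda)$ of $\K(X)$ we have $b = \lim_\lambda b u_\lambda \in \overline{b\,\K(X)} = \overline{bD}$, so $b$ is a limit of elements $bk_j$ with $k_j \in D$; continuity of $\pi_A^{(1)}$ together with closedness of $\mathcal{I}$ then forces $\pi_A^{(1)}(b) \in \mathcal{I}$, whence $a \in I$.

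I expect the saturation step to be the main obstacle, since one must convert the \emph{range} condition $\phi(a)X \subseteq XI$ into the \emph{membership} statement $\pi_A^{(1)}(\phi(a)) \in \mathcal{I}$; the key is to exploit $b\,\Theta_{\eta,\zeta} = \Theta_{b\eta,\zeta}$ so that the range constraint on $b$ is transferred onto the first leg of a rank-one operator, and then to approximate $b$ by products $bk_j$ using that $\K(X)$ has an approximate unit. The hereditary step, by contrast, is an immediate consequence of the representation relations and requires no approximation.
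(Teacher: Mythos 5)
Your proof is correct, and its overall skeleton (ideal, then hereditary, then saturated) matches the paper's; in particular your hereditary step, via $\pi_A(\langle y, \phi(a)\xi\rangle) = \psi_X(y)^*\pi_A(a)\psi_X(\xi) \in \mathcal{I}$, is exactly the paper's computation. The two arguments part ways in the saturation step. The paper stays inside $\mathcal{O}_X$: it chooses an approximate unit $\{e_\lambda\}$ for $\pi_A^{(1)}(\K(X))$ consisting of finite sums $\sum_i \psi_X(x_i^\lambda)\psi_X(y_i^\lambda)^*$, uses covariance to write $\pi_A(a) = \pi_A^{(1)}(\phi(a)) = \lim_\lambda e_\lambda \pi_A(a) e_\lambda$, and then recycles the hereditary identity in the reverse direction, $\psi_X(y)^*\pi_A(a)\psi_X(x) = \pi_A(\langle y, \phi(a)x\rangle) \in \mathcal{I}$, to conclude that each two-sided compression $e_\lambda \pi_A(a) e_\lambda$ lies in $\mathcal{I}$. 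You instead work upstairs in $\K(X)$: the module identity $\phi(a)\Theta_{\eta,\zeta} = \Theta_{\phi(a)\eta,\zeta}$ together with your auxiliary fact $\psi_X(X_I) \subseteq \mathcal{I}$ gives $\pi_A^{(1)}(\phi(a)k) \in \mathcal{I}$ for $k$ in the dense span of rank-one operators, and then a one-sided approximation $\phi(a) = \lim_\lambda \phi(a)u_\lambda$, pushed through the continuous map $\pi_A^{(1)}$, finishes. Both arguments rest on the same three ingredients---covariance, conversion of the range hypothesis $\phi(a)X \subseteq XI$ into membership statements in $\mathcal{I}$, and an approximate-unit limit against the closed ideal $\mathcal{I}$---so neither is more general. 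The paper's version has the mild aesthetic advantage of reusing the hereditary computation verbatim and requiring no auxiliary lemma, though it does need an approximate unit of the special form above (which exists since every $C^*$-algebra admits an approximate unit inside any dense $*$-subalgebra); your version accepts an arbitrary approximate unit of $\K(X)$, isolates the reusable fact $\psi_X(X_I) \subseteq \mathcal{I}$, and replaces the two-sided compression with a lighter one-sided multiplication.
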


\begin{proof}
Let $I := \pi_A^{-1} (\mathcal{I})$.  The fact that $I$ is an ideal in $A$ follows from the fact $\pi_A$ is a $*$-homomorphism.  If $a \in I$ and $x \in X$, then for any $y \in X$ we have
$$\pi_A ( \langle y, \phi(a) x \rangle ) = \psi_X (y)^* \psi_X (ax) = \psi_X (y)^* \pi_A(a) \psi_X (x) \in \mathcal{I}.$$
Thus $\phi(a) x \in X_I = XI$.  Hence $\phi(I)X \subseteq XI$, and $I$ is hereditary.

In addition, let $a \in J_X$ with $\phi(a) X \subseteq XI$.  Since 
\begin{align*}
\pi_A^{(1)} (\K (X) ) &=  \pi_A^{(1)} ( \clspan \{ \theta_{x,y} : x,y \in X \} ) \\
&=  \clspan \{\pi_A^{(1)} ( \theta_{x,y}) : x,y \in X \} =
\clspan \{ \psi_X (x) \psi_X(y)^*: x,y \in X \}
\end{align*}
we may choose an approximate unit $\{ e_\lambda \}_{\lambda \in \Lambda}$ for $\pi^{(1)} ( \K(X))$ with $$e_\lambda := \sum_{i=1}^{N_\lambda} \psi_X(x_i^\lambda) \psi_X(y_i^\lambda)^* \qquad \text{ for each $\lambda \in \Lambda$.}$$  Furthermore, since $a \in J_X$, we conclude $\pi_A( a) = \pi^{(1)} (\phi(a)) \in \pi_A ( \K (X))$, and
$$
\pi_A(a) = \lim_\lambda e_\lambda \pi_A(a) e_\lambda.
$$
Because $\phi(a) X \subseteq XI = X_I$, for any $x, y \in X$ we have $\phi(a)x \in X_I$ and $\langle y, \phi(a) x \rangle \in I$.  Thus for all $x, y \in X$,
$$\psi_X(y)^* \pi_A (a) \psi_X(x) = \pi_A ( \langle y, \phi(a)x \rangle ) \in \mathcal{I}$$
and since $\mathcal{I}$ is an ideal, it follows that $e_\lambda \pi_A(a) e_\lambda \in \mathcal{I}$ for all $\lambda \in \Lambda$.  Hence $\pi_A(a) = \lim_\lambda e_\lambda \pi_A(a) e_\lambda \in \mathcal{I}$ , so that $a \in I$ and $I$ is saturated.  Therefore $I$ is invariant.
\end{proof}

\begin{theorem}[Sufficient Conditions for Simplicity --- the Injective Left Action Case]  \label{CP-simplicity-thm}
Let $X$ be a $C^*$-correspondence over a $C^*$--algebra $A$.  If $X$ satisfies Condition~(S) and $A$ has no invariant ideals other than $A$ and $\{ 0 \}$, then $\mathcal{O}_X$ is simple.  
\end{theorem}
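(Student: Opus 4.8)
The plan is to derive simplicity as a fairly direct consequence of the Cuntz-Krieger Uniqueness Theorem (Theorem~\ref{CKUT-for-CP-algebras-thm}) together with the pullback lemma (Lemma~\ref{pullback-to-invariant-lem}). Let $\mathcal{I}$ be a nonzero (closed, two-sided) ideal of $\mathcal{O}_X$; I would aim to show $\mathcal{I} = \mathcal{O}_X$. The first step is to observe that $I := \pi_A^{-1}(\mathcal{I})$ is, by Lemma~\ref{pullback-to-invariant-lem}, an invariant ideal of $A$, so the hypothesis that $A$ has no invariant ideals other than $A$ and $\{0\}$ forces either $I = \{0\}$ or $I = A$.

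To rule out $I = \{0\}$, I would consider the quotient map $q : \mathcal{O}_X \to \mathcal{O}_X / \mathcal{I}$. Its kernel is $\mathcal{I}$, so the kernel of $q|_{\pi_A(A)}$ consists exactly of those $\pi_A(a)$ with $a \in \pi_A^{-1}(\mathcal{I}) = I = \{0\}$; hence $q|_{\pi_A(A)}$ is injective. Since $X$ satisfies Condition~(S), Theorem~\ref{CKUT-for-CP-algebras-thm} then applies to $q$ and shows that $q$ is injective, i.e.\ $\mathcal{I} = \ker q = \{0\}$, contradicting our choice of $\mathcal{I}$. Therefore $I = A$, which is to say $\pi_A(A) \subseteq \mathcal{I}$.

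It remains to promote $\pi_A(A) \subseteq \mathcal{I}$ to $\mathcal{I} = \mathcal{O}_X$. The key point here---and the one place where care is needed---is that I do not want to assume the left action is nondegenerate, so I cannot simply use $\psi_X(\phi(a)\xi) = \pi_A(a)\psi_X(\xi)$ to reach every $\psi_X(\xi)$. Instead I would use the inner-product identity $\psi_X(\xi)^* \psi_X(\xi) = \pi_A(\langle \xi, \xi \rangle) \in \pi_A(A) \subseteq \mathcal{I}$ for each $\xi \in X$. Since $\mathcal{I}$ is a closed two-sided ideal and $c^*c \in \mathcal{I}$ implies $c \in \mathcal{I}$ (because the image of $c$ in $\mathcal{O}_X/\mathcal{I}$ has vanishing norm), this yields $\psi_X(\xi) \in \mathcal{I}$ for all $\xi \in X$. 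Thus $\mathcal{I}$ is a closed subalgebra containing the generating set $\psi_X(X) \cup \pi_A(A)$, so $\mathcal{I} = \mathcal{O}_X$.

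Finally, to conclude genuine simplicity rather than the degenerate case $\mathcal{O}_X = \{0\}$, I would note that Condition~(S) forces $\phi_X$ to be injective (Proposition~\ref{Condition-S-implies-injective-prop}), whence $\pi_A$ is injective and $\mathcal{O}_X \supseteq \pi_A(A) \neq \{0\}$ as soon as $A \neq \{0\}$ (which is implicit in the hypothesis that $A$ and $\{0\}$ are distinct invariant ideals). I expect no serious obstacle in this argument: essentially all of the analytic difficulty has already been absorbed into the Cuntz-Krieger Uniqueness Theorem, and the remaining work is the short algebraic deduction above. The only subtlety worth flagging is the passage from $\pi_A(A) \subseteq \mathcal{I}$ to $\psi_X(X) \subseteq \mathcal{I}$, where one might initially expect to need nondegeneracy of the left action but can in fact avoid it via the inner-product identity.
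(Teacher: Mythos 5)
Your proof is correct and follows essentially the same route as the paper: pull back the ideal via Lemma~\ref{pullback-to-invariant-lem}, use the hypothesis that $A$ has no nontrivial invariant ideals, and apply Theorem~\ref{CKUT-for-CP-algebras-thm} to the quotient map. The only differences are cosmetic---you show a nonzero ideal must be all of $\mathcal{O}_X$ where the paper shows a proper ideal must be zero---and your explicit justification that $\pi_A(A) \subseteq \mathcal{I}$ forces $\mathcal{I} = \mathcal{O}_X$, via $\psi_X(\xi)^*\psi_X(\xi) = \pi_A(\langle \xi, \xi \rangle) \in \mathcal{I}$ and the hereditary property of closed ideals, nicely fills in a step that the paper's proof (in the sentence ``Since $\mathcal{I} \neq \mathcal{O}_X$, we conclude $I \neq A$'') leaves implicit.
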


\begin{proof}
Let $(\psi_X, \pi_A) : (X,A) \to \mathcal{O}_X$ be a universal covariant representation.  Suppose $\mathcal{I}$ is an ideal in $\mathcal{O}_X$ with $\mathcal{I} \neq \mathcal{O}_X$.  Let $I := \pi_A^{-1}(\mathcal{I})$.  By Lemma~\ref{pullback-to-invariant-lem}, $I$ is an invariant ideal of $A$.  Hence our hypothesis implies $I = A$ or $I = \{ 0 \}$.  Since $\mathcal{I} \neq \mathcal{O}_X$, we conclude $I \neq A$ and therefore $I = \{ 0 \}$.  Define $Q_{\mathcal{I}} : \mathcal{O}_X \to \mathcal{O}_X/ \mathcal{I}$ to be the quotient map $Q_{\mathcal{I}} (x) = x + \mathcal{I}$.  Then for any $a \in A$ we have
$$Q_\mathcal{I} (\pi_A(a)) = 0 \iff \pi_A(a)+\mathcal{I} = 0 + \mathcal{I} \iff \pi_A(a) \in \mathcal{I} \iff a \in I \iff a = 0.$$
Hence $Q_\mathcal{I} |_{\pi_A(A)}$ is injective, and Theorem~\ref{CKUT-for-CP-algebras-thm} implies $Q_{\mathcal{I}} : \mathcal{O}_X \to \mathcal{O}_X/ \mathcal{I}$ is injective.  But the only way this can occur is if $\mathcal{I} = \{ 0 \}$.  Hence $\mathcal{O}_X$ is simple.
\end{proof}

\begin{remark}
Theorem~\ref{CP-simplicity-thm} gives sufficient conditions for simplicity of a Cuntz-Pimsner algebra.  However, at this time we do not know if Condition~(S) is necessary for simplicity.  If it is, then Condition~(S) would be an appropriate generalization of Condition~(L) to $C^*$-correspondences with injective left action.
\end{remark}

\begin{remark}[Comparison of Theorem~\ref{CP-simplicity-thm} and Schweizer's Simplicity Theorem]
Let $X$ be a $C^*$-correspondence over a $C^*$-algebra $A$.  Schweizer's Simplicity Theorem states that under the hypotheses that $X$ is full, $A$ is unital, and the left action $\phi_X : A \to \Li(X)$ is injective, the Cuntz-Pimsner algebra $\mathcal{O}_X$ is simple if and only if $X$ is nonperiodic and $A$ has no hereditary ideal other than $\{ 0 \}$ and $A$.    

Our result in Theorem~\ref{CP-simplicity-thm} shows that $X$ satisfying Condition~(S) and $A$ having no invariant ideals implies the Cuntz-Pimsner algebra $\mathcal{O}_X$ is simple.  To compare Theorem~\ref{CP-simplicity-thm} to Schweizer's Simplicity Theorem, let us first consider the difference between the following two hypotheses:
\begin{itemize}
\item[(1)]  The only ideals of $A$ that are invariant (i.e., both hereditary and saturated) are $\{ 0 \}$ and $A$.
\item[(2)] The only ideals of $A$ that are hereditary are $\{ 0 \}$ and $A$.
\end{itemize} 
One trivially has $(2) \implies (1)$.  Thus Theorem~\ref{CP-simplicity-thm}, which is the result
\begin{center}
Condition~(S) and (1) $\implies$ $\mathcal{O}_X$ is simple,
\end{center}
is stronger than (and hence implies) the weaker result
\begin{center}
Condition~(S) and (2) $\implies$ $\mathcal{O}_X$ is simple.
\end{center}
Schweizer shows that under his hypotheses, simplicity of $\mathcal{O}_X$ is equivalent to ``Nonperiodic and (2)".  However, when one assumes Schweizer's hypotheses (i.e., $X$ is full, $A$ is unital, and the left action $\phi_X : A \to \Li(X)$ is injective), then (1) and (2) are equivalent.  (To see this in the special case of graph algebras, note that Schweizer's three hypotheses are equivalent to the graph having no sources, finitely many vertices, and no sinks, respectively.)

We have chosen to state Theorem~\ref{CP-simplicity-thm} using the hypothesis (2) for two reasons: First, this formulation produces a stronger sufficiency result.  Second, the corresponding simplicity results for $C^*$-algebras of graphs, topological graphs, and topological quivers is ``$C^*(E)$ is simple $\iff$ $E$ satisfies Condition~(L) and $E^0$ has no nontrivial subsets that are both saturated and hereditary".  Our formulation using (2) not only establishes parallelism between these results, but also anticipates that if a future characterization of simplicity of a Cuntz-Pimsner algebra is found, it will most likely be of the form ``some condition on $X$" together with ``$A$ has no saturated and hereditary ideals other than $\{ 0 \}$ and $A$".

It remains to compare Schweizer's nonperiodic condition and our Condition~(S).  This comparison is more involved, and it is the subject of  Section~\ref{examination-sec} in this paper.
\end{remark}

We conclude this section by examining the situation when the left action is not necessarily injective.

%---------------------
\subsection{Simplicity when the left action is not necessarily injective}
%---------------------

Since Condition~(S) requires the left action of a $C^*$-correspondence to be injective, our result in Theorem~\ref{CP-simplicity-thm} only provides sufficient conditions for simplicity of a Cuntz-Pimsner algebra when the $C^*$-correspondence has injective left action.  However, by combining Theorem~\ref{CP-simplicity-thm} with \cite[Corollary~9.16]{KM}, we are able to derive sufficient conditions for simplicity of the Cuntz-Pimsner algebra of any $C^*$-correspondence, which we establish in Theorem~\ref{simple-sufficient-general-thm}.  Before doing so, we recall the definition of the dilation of a $C^*$-correspondence.

\begin{definition} \label{dilation-def}
Suppose that $X$ is a $C^*$-correspondence over $A$, that $\mathcal{O}_X$ is the Cuntz-Pimsner algebra of $X$, and that $(\psi_X, \pi_A) : (X,A) \to \mathcal{O}_X$ is a universal covariant representation.  Also let $\gamma : \T \to \operatorname{Aut} \mathcal{O}_X$ denote the gauge action of $\mathcal{O}_X$, and let $\overline{A} := \mathcal{O}_X^\gamma$ denote the fixed-point algebra of $\gamma$.  It follows from \cite[Remark~10.7]{Kat2} and \cite[Proposition~5.7]{Kat} that
$$\{ \xi \in \mathcal{O}_X : \gamma_z (\xi) = z\xi \text{ for all $z \in \T$} \} = \overline{\operatorname{span}} (\psi_X(X) \mathcal{O}_X^\gamma ).$$
We define $\overline{X} := \{ \xi \in \mathcal{O}_X : \gamma_z (\xi) = z\xi \text{ for all $z \in \T$} \} = \overline{\operatorname{span}} (\psi_X(X) \mathcal{O}_X^\gamma )$, and observe that $\overline{X}$ is a Hilbert $\overline{A}$-bimodule with inner products
$$ {}_{\overline{X}}\langle \xi, \eta \rangle := \xi \eta^* \quad \text{ and } \quad  \langle \xi, \eta \rangle_{\overline{X}} := \xi^* \eta \qquad \text{ for all $\xi, \eta \in \overline{X}$.} $$
The Hilbert bimodule $(\overline{X}, \overline{A})$ is called the \emph{dilation} of the $C^*$-correspondence $(X,A)$.  As described in \cite[Proposition~10.8]{Kat2}, the natural embeddings of $\overline{X}$ and $\overline{A}$ into $\mathcal{O}_X$ constitute a covariant representation that provides a gauge-equivariant isomorphism $\mathcal{O}_{\overline{X}} \cong \mathcal{O}_X$.  It is also straightforward to show that if the left action of $\overline{A}$ on $\overline{X}$ is injective, then the left action of $A$ on $X$ is injective.
\end{definition}

\begin{remark} \label{noninjective-Schweizer-condition-rem}
If $X$ is a $C^*$-correspondence over $A$, a hypothesis used by Schweizer and others in the study of Cuntz-Pimsner algebras is that $\overline{X}^{\otimes n} \not \cong \overline{A}$ for all $n \in \N$.  It is useful to observe that if $X^{\otimes n} \cong A$ for some $n \in \N$, then $X \otimes X^{\otimes n-1} \cong X^{\otimes n-1} \otimes X  \cong A$ and $X$ is an isomorphism in the category that has $C^*$-algebras as objects and $C^*$-correspondences as morphisms with tensor product as composition.  In this situation \cite[Proposition~2.6]{EKQR} implies that $X$ is an imprimitivity bimodule, and consequently the left action of $A$ on $X$ is injective.  Therefore, using the facts of Remark~\ref{dilation-def}, we see that if $(\overline{X}, \overline{A})$ denotes the dilation of $(X,A)$ the following holds:  If the left action of $A$ on $X$ is not injective, then the left action of $\overline{A}$ on $\overline{X}$ is not injective, and  $\overline{X}^{\otimes n} \not \cong \overline{A}$ for all $n \in \N$.
\end{remark}

The facts from Definition~\ref{dilation-def} and Remark~\ref{noninjective-Schweizer-condition-rem} allow us to obtain the following theorem, which provides sufficient conditions for the simplicity the Cuntz-Pimsner algebra of any $C^*$-correspondence.  Notably, the conditions are divided into two cases, depending on whether or not the left action of $A$ on $X$ is injective.

\begin{theorem}[Sufficient Conditions for Simplicity of Cuntz-Pimsner Algebras]  \label{simple-sufficient-general-thm}
Let $X$ be a $C^*$-correspondence over $A$.
\begin{itemize}
\item[(1)]  If the left action of $A$ on $X$ is not injective, then then $\mathcal{O}_X$ is simple if and only if $A$ has no $X$-invariant ideals other than $A$ and $\{ 0 \}$.
\item[(2)]  If the left action of $A$ on $X$ is injective, $X$ satisfies Condition~(S), and $A$ has no $X$-invariant ideals other than $A$ and $\{0 \}$, then $\mathcal{O}_X$ is simple.
\end{itemize}
\end{theorem}
\begin{proof}
For (1), let $(\overline{X}, \overline{A})$ denote the dilation of $(X,A)$ as described in Definition~\ref{dilation-def}.  The fact that the left action of $A$ on $X$ is not injective implies the left action of $\overline{A}$ on $\overline{X}$ is not injective, and hence $\overline{X}^{\otimes n} \not \cong \overline{A}$ for all $n \in \N$ (see Remark~\ref{noninjective-Schweizer-condition-rem}).  It follows from \cite[Corollary~9.16]{KM} that $\overline{A}$ has no $\overline{X}$-invariant ideals other than $\overline{A}$ and $\{ 0 \}$ if and only if $\mathcal{O}_{\overline{X}} \cong \mathcal{O}_X$ is simple.  Moreover, \cite[Theorem~8.6]{Kat2} shows that the gauge-invariant ideals of a Cuntz-Pimsner algebra are in one-to-one correspondence with $O$-pairs (defined in  \cite[Definition~5.6 and Definition~5.13]{Kat2}) of the coefficient algebra.  Consequently, the coefficient algebra of a $C^*$-correspondence has no invariant ideals other than itself and $\{ 0 \}$ if and only if the associated Cuntz-Pimsner algebra has no gauge-invariant ideals other than itself and $\{ 0 \}$ (or, using popular terminology: the coefficient algebra is minimal if and only if the Cuntz-Pimnser algebra is gauge-simple).  Putting these results together gives
\begin{align*}
& \text{$A$ has no $X$-invariant ideals other than $A$ and $\{ 0 \}$} \\
\iff &\text{$\mathcal{O}_X$ has no gauge-invariant ideals other than $\mathcal{O}_X$ and $\{ 0 \}$ (\cite[Theorem~8.6]{Kat2})} \\
\iff &\text{$\mathcal{O}_{\overline{X}}$ has no gauge-invariant ideals other than $\mathcal{O}_{\overline{X}}$ and $\{ 0 \}$ } \\
&\hspace{1.8in} \text{(since $\mathcal{O}_X$ is equivariantly isomorphic to $\mathcal{O}_{\overline{X}}$)}  \\
\iff & \text{$\overline{A}$ has no $\overline{X}$-invariant ideals other than $\overline{A}$ and $\{ 0 \}$ \  (\cite[Theorem~8.6]{Kat2})} \\
\iff & \text{$\mathcal{O}_{\overline{X}}$ is simple (\cite[Corollary~9.16]{KM})} \\
\iff & \text{$\mathcal{O}_X$ is simple (since $\mathcal{O}_X$ is isomorphic $\mathcal{O}_{\overline{X}}$).}
\end{align*}
Finally, (2) follows from Theorem~\ref{CP-simplicity-thm}.
\end{proof}

%%%%%%%%%%%%%%%%%%%%%%%%%%%%%%%%%%%%%%%%%%%%%%%%%%%%%%%%
\section{Topological Quivers and Condition~(L)} \label{L-iff-S-sec}
%%%%%%%%%%%%%%%%%%%%%%%%%%%%%%%%%%%%%%%%%%%%%%%%%%%%%%%%

\begin{definition}
A topological quiver is a quintuple $\Q = (E^0, E^1, r, s, \lambda)$ consisting of a second countable locally compact Hausdorff space $E^0$ (whose elements are called vertices), a second countable locally compact Hausdorff space $E^1$ (whose elements are called edges), a continuous open map $r : E^1 \to E^0$, a continuous map $s : E^1 \to E^0$, and a family of Radon measures $\lambda = \{ \lambda_v \}_{v \in E^0}$ on $E^1$ satisfying the following two conditions:
\begin{itemize}
\item[(1)] $\operatorname{supp} \lambda_v = r^{-1}(v)$ for all $v \in E^0$, and
\item[(2)] $v \mapsto \int_{E^1} \xi (\alpha) \, d\lambda_v(\alpha)$ is an element of $C_c(E^0)$ for all $\xi \in C_c(E^1)$.
\end{itemize}
\end{definition}

\begin{definition}[The quiver $C^*$-correspondence]
If $\Q = (E^0, E^1, r, s, \lambda)$ is a quiver, we let $A := C_0(E^0)$ and define an $A$-valued inner product on $C_c(E^1)$ by 
$$ \langle \xi, \eta \rangle (v) := \int_{r^{-1}(v)} \overline{\xi (\alpha)} \eta (\alpha) \, d\lambda_v (\alpha) \qquad \text{ for $v \in E^0$ and $\xi, \eta \in C_c(E^1)$}.$$
We let $X(\Q)$ denote the closure of $C_c(E^1)$ with respect to the norm arising from this inner product.  We define a right action of $A$ on $X(\Q)$ by 
$$ (\xi \cdot f) (\alpha) := \xi (\alpha) f (r(\alpha)) \qquad \text{ for $\alpha \in E^1$, $\xi \in C_c(E^1)$, and $f \in C_0(E^0)$}$$
and extending this action to all of $X(\Q)$.  We define a left action $\phi : A \to \Li (X(\Q))$ by 
$$ (\phi(f) \xi) (\alpha) := f(s(\alpha)) \xi (\alpha) \qquad \text{ for $\alpha \in E^1$, $\xi \in C_c(E^1)$, and $f \in C_0(E^0)$} $$
and extending to all of $X(\Q)$.  With this inner product and these actions $X(\Q)$ is a $C^*$-correspondence that we call the \emph{quiver $C^*$-correspondence} of $\Q$.
\end{definition}

\begin{remark} \label{quiver-vector-function-rem}
Although $X(\Q) := \overline{C_c(E^1)}$, as described in \cite[Remark~2.2]{MT2} the elements of $X(\Q)$ may be viewed as the continuous sections of a continuous field of Hilbert spaces.  In particular, if $\xi \in X(\Q)$, then for any $v \in E^0$ the vector $\xi$ determines a function in $L^2 (r^{-1}(v), \lambda_v)$.  Thus when working with $X(\Q)$, the evaluation of the inner product via the integral $\int_{r^{-1}(v)} \overline{\xi (\alpha)} \eta (\alpha) \, d\lambda_v (\alpha)$ can be performed for all vectors $\xi, \eta \in X(\Q)$ and not only those in $C_c(E^1)$.
\end{remark}

\begin{definition}[The quiver $C^*$-algebra]
If $\Q = (E^0, E^1, r, s, \lambda)$ is a quiver, let $X(\Q)$ denote the quiver $C^*$-correspondence of $\Q$.  The \emph{quiver $C^*$-algebra} $C^*(\Q)$ is defined to be the Cuntz-Pimsner algebra $\mathcal{O}_{X(\Q)}$.
\end{definition}

Both topological graph $C^*$-algebras and graph $C^*$-algebras are special cases of topological quiver $C^*$-algebras.  Topological graphs and their $C^*$-algebras are the special case when $r$ is a local homeomorphism and each $\lambda_v$ is counting measure on $r^{-1}(v)$.  Graphs and their $C^*$-algebras are the special case when $E^0$ and $E^1$ both have the discrete topology and each $\lambda_v$ is counting measure on $r^{-1}(v)$.

If $\Q = (E^0, E^1, r, s, \lambda)$ is a topological quiver, we define $E^0_\textnormal{sinks} := E^0 \setminus \overline{s(E^1)}$.  An element of $E^0_\textnormal{sinks}$ is called a \emph{sink}, and we say $\Q$ has no sinks if $E^0_\textnormal{sinks}$ is empty (or, equivalently, if the image of $s : E^1 \to E^0$ is dense in $E^0$).  It is shown in \cite[Proposition~3.15(1)]{MT2} that $\ker \phi_{X(\Q)} = C_0( E^0_\textnormal{sinks})$, and thus the left action $\phi_{X(\Q)}$ is injective if and only if $\Q$ has no sinks.

If $\Q = (E^0, E^1, r, s, \lambda)$ is a topological quiver, a \emph{path} in $\Q$ is a finite sequence of edges $\alpha := e_1 \ldots e_n$ with $r(e_i) = s(e_{i+1})$ for all $1 \leq i \leq n-1$.  We say that the path $\alpha$ has length $| \alpha | := n$, and we let $E^n$ denote the set of paths in $\Q$ of length $n$.  We may extend the maps $r$ and $s$ to $r^n : E^n \to E^0$ and $s^n : E^n \to E^0$ by setting $r^n(\alpha) := r(e_n)$ and $s^n(\alpha) := s(e_1)$.  When no confusion arises, we shall omit the superscript and simply write $r$ (respectively, $s$) for $r^n$ (respectively, $s^n$).

We endow $E^n$ with the topology it inherits as a subspace of the cartesian product $E^1 \times \ldots \times E^1$.  Since $s$ is continuous and $r$ is continuous and open, we see that $s^n$ is continuous and $r^n$ is continuous and open.  We also define a Radon measure $\lambda_v^n$ on $E^n$ with support in $(r^n)^{-1}(v)$ inductively as follows: For $\xi : E^n \to \C$ we let
\begin{align*}
&\int_{(r^n)^{-1}(v)} \xi (e_1 \ldots e_n) \, d\lambda_v^n (e_1 \ldots e_n) \\
&:= \int_{r^{-1} (s(e_2))} \ldots   \int_{r^{-1} (s(e_n))}\int_{r^{-1} (v)}  \xi (e_1 \ldots e_n) \ d\lambda_{s(e_2)} (e_1) \ldots d\lambda_{s(e_n)} (e_{n-1}) d \lambda_{v} (e_n).
\end{align*}
Since the support of $\lambda_v$ is $r^{-1}(v)$ for all $v \in E^0$, it follows that the support of $\lambda_v^n$ is $(r^n)^{-1}(v)$.

If $\Q = (E^0, E^1, r, s, \lambda)$ is a topological quiver, then for any $n \in \N$ we have that $\Q^n := (E^0, E^n, r, s, \lambda^n)$ is a topological quiver as well.  Furthermore, if $X(\Q)$ is the quiver $C^*$-correspondence for $\Q$ over $A := C_0(E^0)$, then $X(\Q)^{\otimes n}$ is isomorphic to $X(\Q^n)$ (cf. \cite[Section~6]{MT2}).  Moreover, we see that for $\xi, \eta \in X(\Q)^{\otimes n}$ we have that $\langle \xi, \eta \rangle \in C_0(E^0)$ is the function with 
$$\langle \xi, \eta \rangle (v) := \int_{(r^n)^{-1}(v)} \overline{\xi (\alpha)} \eta (\alpha) \, d\lambda_v^n (\alpha) \qquad \text{ for $v \in E^0$.}$$

A path $\alpha = e_1 \ldots e_n$ is called a \emph{cycle} if $s(e_1) = r(e_n)$, and we call the vertex $s(e_1) = r(e_n)$ the \emph{base point} of the cycle.  An \emph{exit} for a cycle $\alpha = e_1 \ldots e_n$ is an edge $f \in E^1$ with the property that $s(f) = s(e_i)$ and $f \neq e_i$ for some $1 \leq i \leq n$.  (Note: In earlier literature, cycles we often called ``loops".  In modern terminology, the typical convention is to use the term ``loop" only for cycles of length one.)  A cycle $\alpha = e_1 \ldots e_n$ is called \emph{simple} when $s(e_1) \neq r(e_i)$ for $1 \leq i \leq n-1$.

$ $

\noindent \textbf{Condition~(L):}  If $\Q$ is a topological quiver, we say that $\Q$ satisfies Condition~(L) if the set of base points of cycles with no exits has empty interior in $E^0$.

$ $

\noindent Note that a graph satisfies Condition~(L) if and only if every cycle in the graph has an exit.

\begin{definition}
A path $\alpha := e_1 \ldots e_n \in E^n$ is \emph{returning} if $e_k = e_n$ for some $k \in \{ 1, 2, \ldots, n-1 \}$.  Otherwise $\alpha$ is said to be \emph{nonreturning}.
\end{definition}

\begin{definition}
A nonempty subset $U \subseteq E^n$ is \emph{nonreturning} if whenever $\alpha := e_1 \ldots e_n \in U$ and $\beta := f_1 \ldots f_n \in U$ we have that $\alpha_n \neq \beta_k$ for all $k \in \{ 1, \ldots, n-1 \}$.
\end{definition}

\begin{proposition} \label{L-implies-S-prop}
Let $\Q = (E^0, E^1, r, s, \lambda)$ be a topological quiver with no sinks, and let $X(\Q)$ be the associated quiver $C^*$-correspondence over $A := C_0(E^0)$.  If $\Q$ satisfies Condition~(L), then $X(\Q)$ satisfies Condition~(S).
\end{proposition}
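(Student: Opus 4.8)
The plan is to work with the concrete model $X(\Q)^{\otimes m}\cong X(\Q^m)$, in which everything in Condition~(S) becomes an integral computation. Here $A=C_0(E^0)$, a positive element $a$ is a nonnegative function with $\|a\|=\sup_v a(v)$, the left action on $X(\Q^m)$ is $(\phi(a)\zeta)(\alpha)=a(s^m(\alpha))\,\zeta(\alpha)$, and for $\zeta\in C_c(E^m)$ one has $\langle\zeta,\phi(a)\zeta\rangle(v)=\int_{(r^m)^{-1}(v)}|\zeta(\alpha)|^2\,a(s^m(\alpha))\,d\lambda^m_v(\alpha)$. Fix $a\ge 0$, $n\in\N$, and $\epsilon>0$; I must produce $m>n$ and a nonreturning unit vector $\zeta$ with $\|\langle\zeta,\phi(a)\zeta\rangle\|>\|a\|-\epsilon$. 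If $\|a\|=0$ any nonreturning unit vector suffices, so I assume $\|a\|>0$ and, using continuity of $a$, choose $w\in E^0$ and an open neighborhood $V$ of $w$ on which $a>\|a\|-\epsilon/2$.

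With this model the norm estimate is essentially free once the vector is built. Suppose $\zeta\in C_c(E^m)$ is supported on paths whose source lies in $V$. Then $a(s^m(\alpha))>\|a\|-\epsilon/2$ throughout $\operatorname{supp}\zeta$, so for every $v$ we have $\langle\zeta,\phi(a)\zeta\rangle(v)\ge(\|a\|-\epsilon/2)\,\langle\zeta,\zeta\rangle(v)$; taking the supremum over $v$ and normalizing $\|\zeta\|=1$ gives $\|\langle\zeta,\phi(a)\zeta\rangle\|\ge(\|a\|-\epsilon/2)\|\zeta\|^2=\|a\|-\epsilon/2>\|a\|-\epsilon$. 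Thus the proposition reduces to a single existence statement, which is the heart of the matter: for every nonempty open $V\subseteq E^0$ and every $n$, there exist $m>n$ and $\zeta\in C_c(E^m)$ with $\|\zeta\|=1$, all paths in $\operatorname{supp}\zeta$ having source in $V$, which is a nonreturning vector in the sense of Definition~\ref{nonreturning-def}.

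To produce such a $\zeta$ I would first build a \emph{nonreturning set} $U\subseteq E^m$ (for a suitable $m>n$) of length-$m$ paths with source in $V$, and then invoke the topological-quiver analogues of Katsura's constructions \cite[Lemma~5.6]{Kat3}, \cite[Lemma~5.7]{Kat3} and \cite[\S6]{MT}, which turn an open neighborhood inside a nonreturning set into a compactly supported continuous function that is a nonreturning vector; normalizing yields $\|\zeta\|=1$. The construction of $U$ is where the hypotheses are used. The no-sinks hypothesis (density of $s(E^1)$, equivalently $\ker\phi=\{0\}$ by \cite[Proposition~3.15(1)]{MT2}) lets me start a path with source in $V$ and extend it forward, since openness of $r$ together with density of $s(E^1)$ keeps a nonempty open supply of continuing edges at each stage. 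Condition~(L) is precisely what prevents this forward construction from being forced to repeat its final edge: a forward path is compelled to repeat an edge only once it is trapped in a cycle with no exit, and such trapping can occur for an open set of starting configurations only if the base points of exitless cycles contain an open set. Since Condition~(L) asserts that these base points have empty interior, within any neighborhood I can select a continuation edge lying in an open set of edges disjoint from (a neighborhood of) the previously chosen edges — that is, branch off along an exit to a genuinely fresh final edge — and thereby close off a nonreturning set $U$ of the required length $m>n$ with source in $V$.

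The main obstacle is exactly this construction of $U$: one must simultaneously keep the source of every path in $V$, force the length above the prescribed $n$, and guarantee that the final edges of paths in $U$ avoid all interior edges of paths in $U$ robustly enough that a thickened neighborhood is genuinely nonreturning and carries positive $\lambda^m$-mass. Converting the qualitative empty-interior hypothesis of Condition~(L) into a quantitative choice of an open set of fresh continuing edges — uniform enough to thicken a single nonreturning path into a positive-measure nonreturning set — is the delicate step, and it is where the measure-theoretic data $\{\lambda^m_v\}$ and the openness of $r^m$ must be handled with care. Once $U$ and hence $\zeta$ are in hand, the norm estimate of the second paragraph finishes the proof.
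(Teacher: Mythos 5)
Your overall strategy is the same as the paper's: fix $a \geq 0$, $n$, and $\epsilon$, pass to the nonempty open set $V$ on which $a$ exceeds $\|a\|-\epsilon/2$, produce a norm-one nonreturning vector $\zeta \in X(\Q)^{\otimes m} \cong X(\Q^m)$ with $m > n$ supported on paths whose source lies in $V$, and finish with the integral estimate. Your second paragraph (the norm estimate) is correct and essentially identical to the paper's computation; your use of $\epsilon/2$ even yields the strict inequality that Condition~(S) literally demands.

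The gap is exactly where you say it is: the existence of the nonreturning open set $U$. Your third paragraph is a heuristic, not a proof. The claim that a forward path ``is compelled to repeat its final edge only once it is trapped in a cycle with no exit'' is not literally correct (a path can repeat edges without ever being trapped; the issue is whether repetition can always be \emph{avoided} by an open set of choices), and converting the empty-interior hypothesis of Condition~(L) into a choice of continuation edges that is open, keeps all sources in $V$, forces length $m>n$, and carries positive $\lambda^m_v$-mass is precisely the hard technical content --- which you flag as delicate but do not supply. The paper closes this step by citation: \cite[Lemma~6.11]{MT2} asserts (under Condition~(L) and the no-sinks hypothesis) that for every nonempty open $V \subseteq E^0$ and every $n$ there exist $m > n$ and a nonreturning path $\alpha \in (s^m)^{-1}(V)$, and \cite[Lemma~6.12]{MT2} asserts that such an $\alpha$ can be enclosed in a nonreturning open set $U \subseteq (s^m)^{-1}(V)$; a compactly supported bump function inside $U$ is then a nonreturning vector. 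You have also misallocated what the cited results provide: the quiver analogues of Katsura's lemmas do not merely turn a nonreturning set into a nonreturning vector --- they already contain the entire construction of the nonreturning path and its thickening, i.e., the very step you attempted by hand. Your proposal is repaired by invoking \cite[Lemma~6.11]{MT2} and \cite[Lemma~6.12]{MT2} in place of your third and fourth paragraphs; as written, the central existence statement is asserted rather than proved.
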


\begin{proof}
Let $a \in A := C_0(E^0)$ with $a \geq 0$, let $n \in \N$, and let $\epsilon >0$.  Set $$V := \{ v \in E^0 : a(v) > \| a \| - \epsilon \} \subseteq E^0.$$
Since $V$ is a nonempty open subset of $E^0$, \cite[Lemma~6.11]{MT2} implies that there exists $m > n$ and a nonreturning path $\alpha \in (s^m)^{-1} (V)$.  By \cite[Lemma~6.12]{MT2} there exists an open set $U \subseteq (s^m)^{-1}(V)$ such that $\alpha \in U$ and $U$ is nonreturning.  Using the fact that $U$ is an open subset of the locally compact space $E^m$, choose a continuous compactly supported function $\zeta : E^m \to [0,1]$ with $\| \zeta \| = 1$ and with the support of $\zeta$ contained in $U$. It follows from \cite[Lemma~6.11]{MT2} that $\zeta$ is a nonreturning vector in $X(\Q)^{\otimes m}$.

Since $\langle \zeta, \zeta \rangle$ is a positive continuous function in $C_0(E^0)$ with $ \| \langle \zeta, \zeta \rangle \| = \| \zeta \|^2 = 1$, by the extreme value theorem there exists $v_0 \in E^0$ with $\langle \zeta, \zeta \rangle (v_0) = 1$.  In addition, we have
$$ \langle \zeta, a \zeta \rangle (v_0) = \int_{ (r^m)^{-1} (v_0) } \overline{ \zeta (\alpha) } a (s (\alpha)) \zeta (\alpha) \, d\lambda_{v_0}^n (\alpha).$$
Since $\zeta$ is supported in $U$, whenever $\zeta (\alpha) \neq 0$ we have $\alpha \in U$ and $s(\alpha) \in V$ so that $a(s(\alpha)) > \| a \| - \epsilon$.  Thus
\begin{align*}
\langle \zeta, a \zeta \rangle (v_0) &= \int_{ (r^m)^{-1} (v_0) } \overline{ \zeta (\alpha) } a (s (\alpha)) \zeta (\alpha) \, d\lambda_{v_0}^n (\alpha) \\
&= \int_{ (r^m)^{-1} (v_0) } a(s (\alpha)) \, | \zeta (\alpha)|^2 \, d\lambda_{v_0}^n (\alpha) \\
&\geq \int_{ (r^m)^{-1} (v_0) } ( \| a \| - \epsilon) \, | \zeta (\alpha)|^2 \, d\lambda_{v_0}^n (\alpha) \\
& = ( \| a \| - \epsilon) \int_{ (r^m)^{-1} (v_0) }  | \zeta (\alpha)|^2 \, d\lambda_{v_0}^n (\alpha) \\
& = ( \| a \| - \epsilon) \int_{ (r^m)^{-1} (v_0) }  \overline{ \zeta (\alpha) } \zeta (\alpha) \, d\lambda_{v_0}^n (\alpha) \\
&= ( \| a \| - \epsilon) \, \langle \zeta, \zeta \rangle (v_0) \\
&= ( \| a \| - \epsilon) \cdot 1 \\
&= ( \| a \| - \epsilon). 
\end{align*}
Hence $\| \langle \zeta, a \zeta \rangle \| \geq \| a \| - \epsilon$, and $X(\Q)$ satisfies Condition~(S).
\end{proof}

\begin{proposition} \label{S-implies-L-prop}
Let $\Q = (E^0, E^1, r, s, \lambda)$ be a topological quiver, and let $X(\Q)$ be the associated quiver $C^*$-correspondence over $A := C_0(E^0)$.  If $X(\Q)$ satisfies Condition~(S), then $\Q$ satisfies Condition~(L).
\end{proposition}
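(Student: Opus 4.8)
The plan is to prove the contrapositive: assuming $\Q$ fails Condition~(L), I will exhibit a positive $a \in A = C_0(E^0)$ witnessing the failure of Condition~(S). Failure of Condition~(L) means the set $P$ of base points of cycles with no exits has nonempty interior $W := \operatorname{int}(P)$. The first observation is that the no-exit hypothesis makes the forward dynamics deterministic over $P$: if $v \in P$ is the base point of a no-exit cycle $\mu = g_1 \cdots g_\ell$, then by definition of ``no exit'' we have $s^{-1}(v) = \{g_1\}$, and since every rotation of $\mu$ is again a no-exit cycle, each cycle vertex $r(g_i) \in P$ also emits a single edge. Hence every $v \in P$ emits exactly one edge, the range of that edge again lies in $P$, and the unique length-$m$ path with source $v$ is the periodic path $\alpha_v^\ast$ determined by $\mu$. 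In particular $\alpha_v^\ast$ is \emph{returning} as soon as $m > \ell_v$, where $\ell_v$ is the length of the cycle at $v$.

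Next I would force the cycle lengths to be uniformly bounded on a smaller open set. Writing $P_{\le L}$ for the base points of no-exit cycles of length at most $L$, we have $W = \bigcup_{L}(W \cap P_{\le L})$. Using the topological quiver axioms (continuity of $s$, openness of $r$) one verifies that each $P_{\le L}$ is closed; since $W$ is an open subset of a second countable locally compact Hausdorff space, it is a Baire space, so some $W \cap P_{\le L_0}$ has nonempty interior $W'$. By Urysohn's lemma choose $a \in C_c(E^0)$ with $0 \le a \le 1$, $\|a\| = 1$, and $\operatorname{supp} a \subseteq W'$. I claim $a$ witnesses the failure of Condition~(S) with $n_0 := L_0$ and $\epsilon_0 := 1/2$: I will show that for every $m > L_0$ and every norm-one nonreturning vector $\zeta \in X(\Q)^{\otimes m}$ one has $\langle \zeta, a\zeta\rangle = 0$, so that $\|\langle \zeta, a\zeta\rangle\| = 0 \le \|a\| - \epsilon_0$.

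The core of the argument is that a nonreturning vector must vanish on the forced periodic paths emanating from $W'$. Under the identification $X(\Q)^{\otimes m} \cong X(\Q^m)$,
\[ \langle \zeta, a\zeta\rangle(v) = \int_{(r^m)^{-1}(v)} a(s(\alpha))\,|\zeta(\alpha)|^2\, d\lambda_v^m(\alpha), \]
so the integrand is supported on paths $\alpha$ with $s(\alpha) \in \operatorname{supp} a \subseteq W' \subseteq P$, each of which is the forced path $\alpha_{s(\alpha)}^\ast$ and is returning since $m > L_0 \ge \ell_{s(\alpha)}$. Thus it suffices to prove $\zeta(\alpha) = 0$ ($\lambda_v^m$-a.e.) on this set. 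In the graph case this is a direct computation: expanding the defining relation $\psi_X^{\otimes m}(\zeta)^\ast\, \psi_X^{\otimes \ell}(\xi)\, \psi_X^{\otimes m}(\zeta) = 0$ and using linear independence of the path isometries yields, for all $\gamma_0, \delta \in E^\ell$, the identity $\sum_{\beta} \overline{\zeta(\gamma_0\beta)}\,\zeta(\beta\delta) = 0$, the sum over $\beta \in E^{m-\ell}$ with $s(\beta) = r(\gamma_0)$ and $r(\beta) = s(\delta)$. Taking $\ell = \ell_v$, $\gamma_0 = \mu_v$ (so $r(\gamma_0) = v \in P$), and $\delta$ the terminal length-$\ell$ segment of $\alpha_v^\ast$, the determinism over $P$ collapses the sum to the single term $|\zeta(\alpha_v^\ast)|^2$, whence $\zeta(\alpha_v^\ast) = 0$.

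I expect the main obstacle to be upgrading this collapse to the topological quiver setting, where the finite sum over $\beta$ becomes an integral against the Radon measures $\lambda^{\bullet}$ and the conclusion must be promoted to an almost-everywhere statement. The mechanism should still be the deterministic forward dynamics over $W'$, which makes the relevant fibre of length-$m$ paths essentially parametrized by the source vertex, so that the integral identity coming from the nonreturning relation again pins $\zeta$ to zero on the forced paths; making this measure-theoretic reduction precise is the heart of the proof. A secondary technical point is the verification that each $P_{\le L}$ is closed, which is what powers the Baire reduction. Granting the vanishing, the displayed integral is identically zero, so $\|\langle \zeta, a\zeta\rangle\| = 0 \le \|a\| - \epsilon_0$ for all $m > n_0$ and all norm-one nonreturning $\zeta$; this contradicts Condition~(S) and completes the contrapositive.
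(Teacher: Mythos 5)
Your high-level strategy is the same as the paper's --- contrapositive, determinism of the forward dynamics over no-exit base points, restriction to an open set on which cycle lengths are uniformly bounded, and then showing that nonreturning vectors cannot charge the forced periodic paths --- but two of your steps have genuine gaps. The first is your claim that each $P_{\le L}$ is closed in $E^0$: this is false. Take $E^0 = \{1/k : k \in \N\} \cup \{0\}$ with its topology from $\R$, let $E^1 = \{e_k : k \in \N\}$ be discrete with $s(e_k) = r(e_k) = 1/k$, $\lambda_{1/k} = \delta_{e_k}$, and $\lambda_0 = 0$ (legitimate since $r^{-1}(0) = \emptyset$; sinks are allowed in this proposition). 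This is a topological quiver (each $\{1/k\}$ is open, so $r$ is open), every $e_k$ is a no-exit loop, and $P_{\le 1} = \{1/k : k \in \N\}$ is not closed because $0 \notin P$. What your Baire argument actually needs --- and what is true --- is that $W \cap P_{\le L}$ is relatively closed in $W := \operatorname{int}(P)$, and that requires a real argument: given $v_i \in W \cap P_{\le L}$ with $v_i \to v \in W$, pass to a subsequence whose minimal cycle lengths are a constant $\ell_0 \le L$, use openness of $r^{\ell}$ (and second countability) to lift the $v_i$ to paths $\alpha_i \in E^{\ell}$ with $r(\alpha_i) = v_i$ and $\alpha_i \to \mu$, where $\mu$ is the minimal (hence simple) no-exit cycle at $v$ of length $\ell$; then $s(\alpha_i) \to v$ lies eventually in $W$, so determinism forces the edges of $\alpha_i$ to repeat with period $\ell_0$, and in the limit the edges of $\mu$ repeat with period $\ell_0$, contradicting simplicity unless $\ell \le \ell_0 \le L$. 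I would add that, once repaired, this localization is doing genuinely useful work: the paper instead covers a compact set $\overline{V} \subseteq U$ by the open sets $r((s^n)^{-1}(U))$, which bounds the length of a path from $U$ into each $v \in V$ rather than the length of the cycle based at $v$, so your Baire route delivers the uniform bound that the second half of the argument really uses.

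The second and more serious gap is the step you yourself defer as ``the heart of the proof'': that a nonreturning $\zeta \in X(\Q)^{\otimes m}$ vanishes $\lambda_v^m$-a.e.\ on the forced periodic paths with source in $\operatorname{supp} a$. You establish this only for discrete graphs, where one can expand the relation $\psi_X^{\otimes m}(\zeta)^*\psi_X^{\otimes n}(\xi)\psi_X^{\otimes m}(\zeta) = 0$ as a sum over paths and invoke linear independence; in the quiver setting there is no such expansion, and this is exactly where the paper does its hard work, in the contrapositive direction. Assuming $\|\langle \zeta, a\zeta\rangle\| > \|a\|/2$, the paper picks $v_0$ with $\langle\zeta, a\zeta\rangle(v_0) > \|a\|/2$; by determinism plus the length bound there is a \emph{unique} path $\alpha_0$ of length $m$ with source in $\operatorname{supp} a$ and range $v_0$, so the integral collapses to an atom and $a(s(\alpha_0))\,|\zeta(\alpha_0)|^2\, \lambda_{v_0}^m(\{\alpha_0\}) > \|a\|/2 > 0$; it then builds an explicit test function $\xi_\beta \in C_c(E^n)$, supported in $(s^n)^{-1}(V)$ with $\xi_\beta(\beta) = a(s(\alpha_0))$ for $\beta$ the cycle at $s(\alpha_0)$, and computes via \cite[Lemma~6.4]{MT2} that $\psi_X^{\otimes m}(\zeta)^*\psi_X^{\otimes n}(\xi_\beta)\psi_X^{\otimes m}(\zeta) = \psi_X^{\otimes n}(\eta)$ where $\eta$ takes the strictly positive value above at a suitable cycle of length $n$; injectivity of $\psi_X^{\otimes n}$ then shows $\zeta$ is returning. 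Some version of this atomization-plus-test-function computation is unavoidable in the measure-theoretic setting; until you supply it (together with the corrected relative-closedness argument), your proposal is an outline of the right strategy rather than a proof.
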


\begin{proof}
We shall establish the contrapositive:  If $\Q$ does not satisfy Condition~(L), then $X(\Q)$ does not satisfy Condition~(S).  

Suppose that $\Q$ does not satisfy Condition~(L).  Then there exists a nonempty open subset $U \subseteq E^0$ such that every vertex in $U$ is the base point of a cycle with no exits.  Since $E^0$ is locally compact, there exists a nonempty precompact subset $V \subseteq U$ with $\overline{V} \subseteq U$.  Because every point in $U$ is the base point of a cycle with no exits, we see that $\overline{V} \subseteq U \subseteq \bigcup_{n=1}^\infty r ( (s^n)^{-1} (U))$.  In addition, since $s^n$ is continuous and $r$ is open, for each $n \in \N$ the set $r ( (s^n)^{-1} (U))$  is open.  Therefore $\{ r ( (s^n)^{-1} (U)) : n \in \N \}$ is an open cover of $\overline{V}$, and by the compactness of $\overline{V}$ there exists $N \in \N$ such that $\overline{V} \subseteq \bigcup_{n=1}^N r ( (s^n)^{-1} (U))$.  Thus $V \subseteq \bigcup_{n=1}^N r ( (s^n)^{-1} (U))$, and hence every vertex in $V$ is the base point of a cycle with no exits having length at most $N$.  

To prove that $X(\Q)$ does not satisfy Condition~(S), we shall show that there exists $a \in A := C_0(E^0)$ with $a \geq 0$ and there exists $\epsilon > 0$ such that for all $m > N$ when $\zeta \in X(\Q)^{\otimes m}$ with $ \| \zeta \| = 1$ and $\| \langle \zeta, a \zeta \rangle \|  > \| a \| - \epsilon$, then $\zeta$ is not a nonreturning vector.

Using the fact that $E^0$ is locally compact, we choose a nonzero compactly supported $a \in C_0(E^0)$ with $a \geq 0$ and the support of $a$ contained in $V$.  Also let $\epsilon := \| a \| / 2$.  Suppose $m > N$ and $\zeta \in X(\Q)^{\otimes m}$ with $\| \zeta \| =1$ and $\| \langle \zeta, a \zeta \rangle \| > \| a \| - \epsilon = \| a \| / 2$.  We shall show that $\zeta$ is not a nonreturning vector.

As described in Remark~\ref{quiver-vector-function-rem}, $\zeta$ may be viewed as a complex-valued function.  Choose $v_0 \in E^0$ with $\langle \zeta, a \zeta \rangle (v_0) >  \| a \| / 2$.  Since 
\begin{equation} \label{integral-no-cycles-eq}
\langle \zeta, a \zeta \rangle (v_0) = \int_{ (r^m)^{-1} (v_0) } \overline{ \zeta (\alpha) } a (s (\alpha)) \zeta (\alpha) \, d\lambda_{v_0}^m (\alpha),
\end{equation}
and $a$ has support contained in $V$, there exists a path $\alpha_0 \in E^m$ with $s(\alpha_0) \in V$ and $r(\alpha_0) = v_0$.  Because every vertex in $V$ is the base point of a cycle with no exits of length at most $N$, and since $N < m$, we may write 
$$\alpha_0 = \underbrace{\beta \beta \ldots \beta}_{\text{$k$ times}} \gamma$$ 
where $\beta$ is the unique simple cycle based at $s(\alpha_0) \in W$ and $\gamma$ is a subpath of $\beta$.   However, we see that $\alpha_0$ is then the unique path of length $m$ with source in $V$ and range equal to $v_0$.  Hence 
$$\int_{ (r^m)^{-1} (v_0) } \overline{ \zeta (\alpha) } a (s (\alpha)) \zeta (\alpha) \, d\lambda_{v_0}^m (\alpha) = a (s(\alpha_0) )\, | \zeta (\alpha_0) |^2 \, \lambda_{v_0}^m ( \{ \alpha_0 \} )$$
and \eqref{integral-no-cycles-eq} implies
\begin{equation} \label{product-and-measure-nonzero-eq}
a (s(\alpha_0) )\, | \zeta (\alpha_0) |^2 \, \lambda_{v_0}^m ( \{ \alpha_0 \} )  = \langle \zeta, a \zeta \rangle (v_0) > \| a \| / 2 > 0.
\end{equation}
It follows that $\lambda_{v_0}^m ( \{ \alpha_0 \} ) > 0$.  Let $n := | \beta|$, and note that $n \leq N < m$.  Since $s(\beta) = s(\alpha_0) \in V$, we see that $\beta \in (s^n)^{-1} (V)$.  Because $E^n$ is locally compact, there exists a continuous compactly supported function $\xi_\beta : E^n \to [0, \infty)$ with $\xi_\beta (\beta) = a(s(\alpha_0))$ and the support of $\xi_\beta$ contained in the open set $(s^n)^{-1} (V)$.

Let $(\psi_X, \pi_A) : (X(\Q),A) \to \mathcal{O}_{X(\Q)} = C^*(\Q)$ denote a universal covariant representation of ${}_AX(\Q)_A$ into $C^*(\Q)$.  It follows from \cite[Lemma~6.4]{MT2} that
\begin{equation} \label{product-eta-eq}
\psi_X^{\otimes m} (\zeta)^* \psi_X^{\otimes n} (\xi_\beta) \psi_X^{\otimes m} ( \zeta) = \psi_X^{\otimes n} ( \eta),
\end{equation}
where $\eta : E^n \to \C$ is defined by
$$ \eta (\alpha) := \int_{(r^m)^{-1} (s(\alpha))} \overline{ \zeta (e_1 \ldots e_m)} \xi_\beta (e_1 \ldots e_n) \zeta (e_{n+1} \ldots e_m \alpha) \, d \lambda_{s(\alpha)}^m (e_1 \ldots e_m).$$
Recalling that $\gamma$ is a subpath of the cycle $\beta$, we may write $\beta = \gamma \mu$, where $\mu$ is a subpath of $\beta$ with $s(\mu) = r(\gamma)$ and $r (\mu) = s(\beta) = s(\gamma)$.  Note that $\mu \gamma$ is a cycle of length $n$, which is based at $s(\mu) = r(\gamma)$.  

As noted before, $\alpha_0 = \beta \beta \ldots \beta \gamma$ is the unique path of length $m$ with source in $V$ and range equal to $v_0$.  Hence if we take $\mu \gamma \in E^n$, then $\alpha_0$ is also the unique path of length $m$ with source in $V$ and range equal to $r(\mu \gamma) = r(\gamma) = v_0$.  Recalling that $\xi_\beta$ has support contained in $(s^n)^{-1} (V)$ (and hence $\xi_\beta$ vanishes on any path of length $n$ with source outside $V$), we conclude
\begin{align*}
\eta (\mu \gamma) &= \overline{\zeta(\alpha_0)} \ \xi_\beta (\beta) \ \zeta ( (\underbrace{\beta \beta \ldots \beta}_{\text{$k-1$ times}} \gamma) (\mu \gamma) ) \ \lambda_{v_0}^m ( \{ \alpha_0 \} ) \\
&= \overline{\zeta(\alpha_0)} \ a(s(\alpha_0)) \ \zeta ( (\underbrace{\beta \beta \ldots \beta}_{\text{$k-1$ times}} (\gamma \mu ) \gamma ) \ \lambda_{v_0}^m ( \{ \alpha_0 \} ) \\
&= \overline{\zeta(\alpha_0)} \ a(s(\alpha_0)) \ \zeta ( (\underbrace{\beta \beta \ldots \beta}_{\text{$k$ times}}  \gamma ) \ \lambda_{v_0}^m ( \{ \alpha_0 \} ) \\
&= \overline{\zeta(\alpha_0)} \ a(s(\alpha_0)) \ \zeta ( \alpha_0 ) \ \lambda_{v_0}^m ( \{ \alpha_0 \} ) \\
&=  a(s(\alpha_0)) \ | \zeta(\alpha_0)|^2  \ \lambda_{v_0}^m ( \{ \alpha_0 \} ) \\
&> 0
\end{align*}
where the last line follows from \eqref{product-and-measure-nonzero-eq}.

Thus $\eta \neq 0$.  Since $(\psi_X, \pi_A)$ is a universal covariant representation of $X(\Q)$, both $\psi_X^{\otimes n}$ and $\pi_A$ are injective, and hence $\psi_X^{\otimes n} (\eta) \neq 0$.  It follows from \eqref{product-eta-eq} that $\psi^{\otimes m}_X (\zeta)^* \psi^{\otimes n}_X (\xi_\beta) \psi^{\otimes m}_X ( \zeta) \neq 0$.  Thus $\zeta$ is not a nonreturning vector.  Hence $X(\Q)$ does not satisfy Condition~(S).
\end{proof}

\begin{theorem}
Let $\Q = (E^0, E^1, r, s, \lambda)$ be a topological quiver with no sinks, and let $X(\Q)$ be the associated quiver $C^*$-correspondence over $A := C_0(E^0)$.  Then $\Q$ satisfies Condition~(L) if and only if $X(\Q)$ satisfies Condition~(S).
\end{theorem}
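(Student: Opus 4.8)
The plan is to obtain this biconditional immediately from the two propositions just established, since each direction has already been proved in isolation. For the forward implication, I would invoke Proposition~\ref{L-implies-S-prop}: under the standing assumption that $\Q$ has no sinks, if $\Q$ satisfies Condition~(L) then $X(\Q)$ satisfies Condition~(S). For the reverse implication, I would invoke Proposition~\ref{S-implies-L-prop}: if $X(\Q)$ satisfies Condition~(S) then $\Q$ satisfies Condition~(L). Thus the theorem is simply the conjunction of these two results, and the proof is a two-sentence citation with no new computation.

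The one point worth recording is the asymmetry in the hypotheses. The direction (L)$\Rightarrow$(S) genuinely uses the no-sinks assumption, since the construction in Proposition~\ref{L-implies-S-prop} relies on the combinatorial lemmas \cite[Lemma~6.11]{MT2} and \cite[Lemma~6.12]{MT2} producing a nonreturning path with source in a prescribed open set, which in turn requires the image of $s$ to be dense. By contrast, the direction (S)$\Rightarrow$(L) in Proposition~\ref{S-implies-L-prop} is stated and proved for an \emph{arbitrary} topological quiver, so the no-sinks hypothesis is not needed there. In fact the no-sinks condition comes for free in this direction: by Proposition~\ref{Condition-S-implies-injective-prop} Condition~(S) forces the left action $\phi_{X(\Q)}$ to be injective, and since $\ker \phi_{X(\Q)} = C_0(E^0_{\textnormal{sinks}})$ this already forces $E^0_{\textnormal{sinks}} = \emptyset$. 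I would flag this in a sentence so that the reader understands the no-sinks assumption in the theorem statement is an artifact of the harder (L)$\Rightarrow$(S) half.

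Since all the substantive work lives in Propositions~\ref{L-implies-S-prop} and~\ref{S-implies-L-prop}, there is no genuine obstacle here; the theorem merely packages them together and, via the special cases recorded after the quiver definitions, yields the corresponding statements for topological graphs and graphs as immediate corollaries.
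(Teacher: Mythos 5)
Your proposal is correct and matches the paper's proof exactly: the theorem is proved by citing Proposition~\ref{L-implies-S-prop} and Proposition~\ref{S-implies-L-prop}, with no further argument. Your additional observation about the asymmetry of the no-sinks hypothesis (and that Condition~(S) forces no sinks via Proposition~\ref{Condition-S-implies-injective-prop} and $\ker \phi_{X(\Q)} = C_0(E^0_{\textnormal{sinks}})$) is accurate and a nice remark, but it goes beyond what the paper records.
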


\begin{proof}
The result follows from Proposition~\ref{L-implies-S-prop} and Proposition~\ref{S-implies-L-prop}.
\end{proof}

%%%%%%%%%%%%%%%%%%%%%%%%%%%%%%%%%%%%%%%%%%%%%%%%%%%%%%%%
\section{An Examination of Schweizer's Simplicity Result} \label{examination-sec}
%%%%%%%%%%%%%%%%%%%%%%%%%%%%%%%%%%%%%%%%%%%%%%%%%%%%%%%%

If $X$ and $Y$ are $C^*$-correspondences over a $C^*$-algebra $A$, a \emph{$C^*$-correspondence isomorphism} from $X$ to $Y$ is a bijective map $\Phi: X \to Y$  such that 
\begin{enumerate}
\item[(i)] $\Phi(a\cdot x)=a\cdot\Phi(x)$, and 
\item[(ii)] $\langle \Phi(x), \Phi(x') \rangle = \langle x,x' \rangle$,
\end{enumerate}
for all $x,x'\in X$  and $a\in A.$  We say $X$ and $Y$ are \emph{isomorphic} (or, for emphasis, \emph{isomorphic as $C^*$-correspondences}) when such as isomorphism exists.  Also note that any $C^*$-correspondence isomorphism is automatically linear and preserves the right action (i.e., $\Phi (xa) = \Phi(x)a$ for $x \in X$ and $a \in A$) due to (ii).

\begin{definition}
Let $X$ be a $C^*$-correspondence over the $C^*$-algebra $A$.  We say $X$ is \emph{full} if $$\clspan \{ \langle x, y \rangle : x,y \in X \} = A.$$
\end{definition}

\begin{remark}
Let $A$ be a $C^*$-algebra. Then $A$ is naturally a $C^*$-correspondence over itself with the bimodule structure given by multiplication and inner product is defined by $\langle a,b\rangle = a^*b$.  This $C^*$-correspondence is called the \emph{identity $C^*$-correspondence on $A$}, and it is used in the definition of Schweizer's ``nonperiodic" condition. 
\end{remark}

\begin{definition}[Nonperiodic Condition]
Let $X$ be a $C^*$-correspondence over the $C^*$-algebra $A$.  We say $X$ is \emph{nonperiodic} if for all $n \in \N \cup \{ 0 \}$,  $X^{\otimes n}  \cong A$ (as $C^*$-correspondences) implies $n=0$.  We say that a $C^*$-correspondence is \emph{periodic} if it is not nonperiodic.
\end{definition}

\begin{theorem} \textnormal{(Schweizer's Simplicity Theorem \cite[Theorem~3.9]{Sch}.)}
Let $X$ be a $C^*$-correspondence over a $C^*$-algebra $A$, and suppose that $X$ is full, $A$ is unital, and the left action $\phi_X : A \to \mathcal{L} (X)$ is injective.  Then $\mathcal{O}_X$ is simple if and only if $X$ is nonperiodic and $A$ has no hereditary ideals other than $\{ 0 \}$ and $A$.
\end{theorem}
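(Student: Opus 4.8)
The statement is a biconditional, and I would handle the two implications by quite different means, treating the ``only if'' direction by contraposition and reserving the real analytic work for the ``if'' direction. For the ``only if'' direction, suppose first that $A$ has a hereditary ideal $I$ with $\{0\} \neq I \neq A$. Under the standing hypotheses (full, unital, injective left action) hereditary and invariant ideals coincide, exactly as recorded in the remark following Theorem~\ref{CP-simplicity-thm}, so $I$ is invariant; Katsura's classification of gauge-invariant ideals \cite[Theorem~8.6]{Kat2} then produces a gauge-invariant ideal $\mathcal{I} \trianglelefteq \mathcal{O}_X$ with $\pi_A^{-1}(\mathcal{I}) = I$, which is nonzero and proper, so $\mathcal{O}_X$ is not simple. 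Suppose next that $X$ is periodic, say there is a $C^*$-correspondence isomorphism $\Phi : X^{\otimes p} \to A$ for some minimal $p \geq 1$. I would set $w := \Phi^{-1}(1_A)$ and $u := \psi_X^{\otimes p}(w)$; a direct computation shows $u^* u = \pi_A(\langle w, w\rangle) = \pi_A(1_A)$ and $u^* \pi_A(a) u = \pi_A(\langle w, a\cdot w\rangle) = \pi_A(a)$, so that $u$ is a unitary of gauge-degree $p$ (i.e. $\gamma_z(u) = z^p u$) commuting with $\pi_A(A)$. This unitary forces the $\mathbb{T}$-grading of $\mathcal{O}_X$ to be genuinely periodic, and from it one manufactures a proper (necessarily non-gauge-invariant) ideal, so again $\mathcal{O}_X$ fails to be simple.

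For the ``if'' direction I would mirror the proof of Theorem~\ref{CP-simplicity-thm}. Let $\mathcal{I}$ be an ideal with $\mathcal{I} \neq \mathcal{O}_X$ and set $I := \pi_A^{-1}(\mathcal{I})$. By Lemma~\ref{pullback-to-invariant-lem} the ideal $I$ is invariant, hence hereditary, so the no-nontrivial-hereditary-ideals hypothesis gives $I = A$ or $I = \{0\}$; since $I = A$ would place $\pi_A(1_A)$ in $\mathcal{I}$ and force $\mathcal{I} = \mathcal{O}_X$, we must have $I = \{0\}$. Thus the quotient map $\rho := Q_{\mathcal{I}} : \mathcal{O}_X \to \mathcal{O}_X / \mathcal{I}$ restricts to an injection on $\pi_A(A)$, and the problem collapses to a Cuntz-Krieger-type uniqueness statement: any $*$-homomorphism out of $\mathcal{O}_X$ that is injective on $\pi_A(A)$ is injective. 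Following the template of Theorem~\ref{CKUT-for-CP-algebras-thm}, I would reduce this, via the faithful conditional expectation $E$, to two facts: that $\rho$ is isometric on the core $\mathcal{O}_X^\gamma = \im \kappa^{\psi_X,\pi_A}$, which propagates automatically from injectivity on $\pi_A(A)$ through $\kappa^{\psi,\pi}$, and the norm estimate $\|\rho(E(x))\| \leq \|\rho(x)\|$ for positive $x$.

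The hard part will be precisely this norm estimate, which is the point where nonperiodicity must be made to do the work that Condition~(S) and nonreturning vectors do in Lemma~\ref{major-technical-lem}. I would try to establish the analogue of that lemma: given a positive $x = \sum_i \psi_X^{\otimes n_i}(\xi_i)\psi_X^{\otimes m_i}(\eta_i)^*$ with diagonal part $y_0 = \sum_{n_i = m_i} \psi^{\otimes n_i}(\xi_i)\psi^{\otimes m_i}(\eta_i)^*$, produce a contraction $b$ with $b^* \rho(x) b$ approximately equal to $\pi(a)$ for some $a \in A$ with $\|a\| = \|y_0\|$, by compressing with a vector that annihilates every off-diagonal term $\psi^{\otimes n_i'}(\xi_i')$ and $\psi^{\otimes m_i'}(\eta_i')^*$ with $n_i \neq m_i$. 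Under Schweizer's hypotheses the compressing vector is supplied by nonperiodicity together with fullness and unitality: the absence of any isomorphism $X^{\otimes k} \cong A$ with $k \geq 1$ is exactly the obstruction whose vanishing lets the off-diagonal terms be pushed to zero, while fullness and unitality guarantee enough vectors to realize the value $\|a\| = \|y_0\|$ on the diagonal.

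Carrying out that compression is the technical heart of Schweizer's argument, and it is also the step whose collapse when fullness or unitality is dropped explains why nonperiodicity alone is insufficient in full generality. This is, of course, the very phenomenon that motivates Condition~(S) and the examples in Section~\ref{examination-sec} of the present paper: Condition~(S) axiomatizes directly the approximate nonreturning structure that nonperiodicity only delivers under the additional hypotheses of fullness and unitality.
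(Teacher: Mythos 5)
Before anything else, note that the paper does not prove this statement: it is Schweizer's theorem, imported verbatim with the citation \cite[Theorem~3.9]{Sch} (plus a remark making the injectivity hypothesis explicit). So there is no proof in the paper to compare against, and your proposal must stand on its own. As it stands, it is an outline with two genuine gaps, one of which is fatal to the strategy as formulated.

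The fatal gap is in your ``if'' direction. You propose to prove the needed Cuntz--Krieger-type uniqueness statement by establishing an analogue of Lemma~\ref{major-technical-lem} in which the compressing vector is ``supplied by nonperiodicity together with fullness and unitality.'' No such lemma can exist: the paper's own Example~\ref{weakly-connected-ex} exhibits a finite graph with no sinks and no sources --- so $X(E)$ is full, $A = C(E^0)$ is unital, and the left action is injective, i.e.\ all of Schweizer's hypotheses hold --- whose correspondence is nonperiodic, and yet $C^*(E)$ has a proper nonzero ideal containing no vertex projection (Remark~\ref{conclusions-comparison-rem}); the quotient by that ideal is injective on $\pi_A(A)$ but not injective. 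Since your envisioned compression lemma, combined with injectivity of $\rho|_{\pi_A(A)}$, would yield uniqueness exactly as in the proof of Theorem~\ref{CKUT-for-CP-algebras-thm}, the lemma is simply false under the hypotheses you allot to it. The uniqueness statement you actually need is true, but only because of the additional minimality hypothesis (no nontrivial hereditary ideals), which your compression argument never invokes --- and there is no indication of how a vector-compression argument could exploit it. Indeed, the entire point of Section~\ref{examination-sec} is that nonperiodicity does not yield a Cuntz--Krieger uniqueness theorem, even under Schweizer's hypotheses; Schweizer's actual proof is organized differently (a dichotomy theorem: for minimal $X$, either $\mathcal{O}_X$ is simple or $X^{\otimes p} \cong A$ for some $p \geq 1$, proved via dilation to imprimitivity bimodules and crossed-product techniques), not via an ideal-intersection argument.

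The second gap is in the periodic case of your ``only if'' direction. Your computation that $u := \psi_X^{\otimes p}(\Phi^{-1}(1_A))$ is a unitary of gauge degree $p$ commuting with $\pi_A(A)$ is correct (one also checks $uu^* = 1$ using $\Theta_{w,w} = \operatorname{id}_{X^{\otimes p}}$ and covariance). But ``from it one manufactures a proper ideal'' is precisely the content of this direction, and it is missing. The natural route --- show $u$ is central and take the ideal generated by $u - 1$ --- breaks down: centrality would require $w \otimes \xi = \xi \otimes w$ in $X^{\otimes (p+1)}$ for all $\xi \in X$, and the two identifications of $X^{\otimes(p+1)}$ with $X$ (via $\Phi \otimes \operatorname{id}_X$ and via $\operatorname{id}_X \otimes \Phi$) differ in general by a nontrivial automorphism, so this identity does not follow from the mere existence of $\Phi$. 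What one actually needs is, for instance, a covariant representation $(\psi,\pi)$ with $\pi$ injective in which the image of $u$ is $1$ (built on a ``period-$p$ Fock space'' $\bigoplus_{k=0}^{p-1} X^{\otimes k} \otimes_A H$ over a faithful representation of $A$, glued at the top by $\Phi$); then $\ker \rho_{(\psi,\pi)}$ is nonzero (it contains $u - 1 \neq 0$) and proper, and verifying covariance of this representation is the real work. Finally, a repairable misstatement: hereditary ideals do \emph{not} coincide with invariant ideals under Schweizer's hypotheses --- the remark you cite asserts only that the two nonexistence conditions are equivalent. What you need, and what is true, is that the saturation of a nontrivial hereditary ideal is a nontrivial invariant ideal, which then produces a nontrivial gauge-invariant ideal of $\mathcal{O}_X$ via \cite[Theorem~8.6]{Kat2}.
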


\begin{remark}
Although the hypothesis of the injectivity of the left action is not mentioned in the statement of \cite[Theorem~3.9]{Sch}, it is used to apply \cite[Theorem~3.8]{Sch} in its proof.  Our statement above of Schweizer's theorem makes this hypothesis explicit.
\end{remark}

\begin{definition}
If $X$ is a $C^*$-correspondence over a $C^*$-algebra $A$, we say $X$ is \emph{nondegenerate} when $\clspan \{ a \cdot x : a \in A \text{ and } x \in X \} = X$.
\end{definition}

\begin{definition}
Let $E=(E^0,E^1,r,s)$ be a directed graph. The graph correspondence is the nondegenerate $C^*$-correspondence $X(E)$ over $C_0(E^0)$ defined by 
\[ X(E)=\{\xi: E^1\rightarrow\mathbb{C}: \text{ the map } v\mapsto \sum_{r(e)=v} |\xi(e)|^2 \text{ is in } C_0(E^0)\},\]
with module actions and $C_0(E^0)$-valued inner product given by 
$$(a\cdot \xi)(e)= a(s(e))\xi(e), \ \ (\xi\cdot b)(e)=\xi(e)b(r(e)), \ \ \text{ and} \ \ \langle \xi,\eta\rangle(v)=\sum_{r(e)=v}\overline{\xi(e)}\eta(e)$$
for $a,b\in C_0(E^0),$ $\xi, \eta\in X(E),$ $e\in E^1,$ and $v\in E^0.$ 
\end{definition}

\begin{definition}
If $E = (E^0, E^1, r, s)$ is a graph, then for any $n \in \N$ we let $E^n$ denote the set of paths in $E$ of length $n$.  We also let $E^{\times n} := (E^0, E^n, r, s)$ denote the graph whose vertices are the vertices of $E$ and whose edges are the paths of length $n$ of $E$.
\end{definition}

\begin{remark} It follows from \cite[Proposition 3.3]{CDE} that if $E$ is a graph, then $X(E)^{\otimes n} \cong X(E^{\times n})$ as $C^*$-correspondences for all $n \in \N$.
\end{remark}

\begin{example}
Let $E$ be the graph
$$
\xymatrix{
& v_2 \ar[r]^{e_2} & v_3 \ar[rd]^{e_3} & \\
v_1 \ar[ru]^{e_1} & & & v_4 \\
& v_n \ar[lu]^{e_n} \ar@{.}[r]& \ar@{.}[ru] & \\
}
$$
consisting of a simple cycle with $n$ vertices for some $n \in \N$.   Note that $A := C(E^0) \cong \C^n$.  Since each vertex of $E$ is the base point of a cycle of length $n$, we see that $E^{\times n}$ is the graph

$ $

$$
\xymatrix{
v_1 \ar@(ur,ul) & v_2 \ar@(ur,ul) & \ldots & v_n \ar@(ur,ul) 
}
$$
so that $X(E^{\times n}) \cong  \C^n$ with the left and right multiplication occurring componentwise.  Thus $X(E)^{\otimes n} \cong X(E^{\times n}) \cong \C^n \cong A$ as $C^*$-correspondences over $A$, and $X(E)$ is periodic.
\end{example}

\begin{example} \label{lcm-periodic-ex}
Let $E$ be the graph
$$
\xymatrix{
& & &  w_3 \ar[dl] & x_4 \ar[d] & x_3 \ar[l] \\
v_1 \ar@/_/[r] & v_2  \ar@/_/[l] &  w_1 \ar[r] & w_2 \ar[u] & x_1 \ar[r] & x_2 \ar[u]
}
$$
consisting of the disjoint union of three simple cycles of lengths 2, 3, and 4, respectively.  If we take the least common multiple of these lengths, $\operatorname{lcm} (2,3,4) = 12$, then we see that each vertex is the base point of a cycle of length 12, and hence $E^{\times 12}$ is the graph

$ $

$$
\xymatrix{
& & &  w_3  \ar@(ur,ul) & x_4  \ar@(ur,ul) & x_3  \ar@(ur,ul)  \\
v_1 \ar@(ur,ul) & v_2   \ar@(ur,ul) &  w_1  \ar@(ur,ul) & w_2  \ar@(ur,ul)  & x_1 \ar@(ur,ul) & x_2  \ar@(ur,ul) 
}
$$
In particular, since there are 9 vertices,  we have $X(E)^{\otimes 12} \cong X(E^{\times 12}) \cong \C^9 \cong A$ as $C^*$-correspondences over $A$, and thus $X(E)$ is periodic.

\end{example}

\begin{proposition}
Let $E$ be a graph with a finite number of vertices, no sinks, and no sources.  (Note that $E$ is not required to have a finite number of edges.)  Then the following are equivalent:
\begin{itemize}
\item[(1)] $X(E)$ is nonperiodic.
\item[(2)] $E$ is not a disjoint union of simple cycles.
\end{itemize}

\end{proposition}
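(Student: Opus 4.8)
The plan is to reduce the whole statement to a condition on the vertex matrix of $E$ and then invoke a classical fact about nonnegative integer matrices. Since $E^0$ is finite, $A = C(E^0) \cong \C^{|E^0|}$; let $M$ be the $E^0 \times E^0$ matrix with $M_{vw} := \#\{e \in E^1 : s(e)=v,\ r(e)=w\}$, so that $M$ has nonnegative integer entries, $(M^n)_{vw}$ counts the paths of length $n$ from $v$ to $w$, and $M^n$ is the vertex matrix of $E^{\times n}$. Using the isomorphism $X(E)^{\otimes n} \cong X(E^{\times n})$ recorded above, $X(E)$ is periodic precisely when $X(E^{\times n}) \cong A$ as $C^*$-correspondences for some $n \geq 1$. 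The heart of the argument is the claim that, for a graph $F$ with finite vertex set, $X(F) \cong C(F^0)$ as correspondences if and only if the vertex matrix of $F$ is the identity (equivalently, $F$ is a disjoint union of loops, one at each vertex). Granting this, $X(E)$ is periodic if and only if $M^n = I$ for some $n \geq 1$.

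For the direction $\neg(2) \Rightarrow \neg(1)$, suppose $E$ is a disjoint union of simple cycles of lengths $\ell_1,\dots,\ell_k$. Then $M$ is a permutation matrix whose cycle lengths are the $\ell_i$, so with $n := \operatorname{lcm}(\ell_1,\dots,\ell_k)$ we have $M^n = I$; hence $E^{\times n}$ is a disjoint union of loops and $X(E)^{\otimes n} \cong X(E^{\times n}) \cong A$, so $X(E)$ is periodic. This is exactly the computation illustrated in Example~\ref{lcm-periodic-ex}.

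The main obstacle will be proving the claim, and specifically extracting from a correspondence isomorphism $\Phi : X(F) \to A$ the full matrix equation rather than merely a count of paths into each vertex. I would decompose over the minimal projections $p_v \in A$. As a right Hilbert $A$-module, $X(F) = \bigoplus_v X(F)p_v$, where $X(F)p_v$ consists of the functions supported on $r^{-1}(v)$ and so has dimension $|r^{-1}(v)|$, whereas $Ap_v = \C p_v$ has dimension $1$. Because $\Phi$ preserves the right action and the $A$-valued inner product, it is a unitary carrying $X(F)p_v$ onto $Ap_v$, forcing $|r^{-1}(v)| = 1$ for every $v$. For the left action, observe that $\phi_A(p_v)$ is the orthogonal projection onto $Ap_v$, while the left action $\phi_{X(F)}(p_v)$ is multiplication by the indicator of $s^{-1}(v)$, i.e.\ the orthogonal projection onto the functions supported on $s^{-1}(v)$. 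Since $\Phi$ intertwines the left actions and is unitary, $\phi_{X(F)}(p_v) = \Phi^{*}\phi_A(p_v)\Phi$ is the projection onto $\Phi^{-1}(Ap_v) = X(F)p_v$, that is, onto the functions supported on $r^{-1}(v)$. Equality of these two projections forces equality of ranges, hence $s^{-1}(v) = r^{-1}(v)$ for every $v$ (each edge $e$ supports the nonzero vector $\delta_e \in X(F)$, so the two indicators must agree everywhere). Together with $|r^{-1}(v)| = 1$ this says the unique incoming edge at $v$ coincides with the unique outgoing edge and is a loop, so the vertex matrix of $F$ is $I$. The converse is the routine identification of each one-dimensional fiber with $\C$, under which both actions become componentwise multiplication.

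Finally, for $\neg(1) \Rightarrow \neg(2)$ I would run the reduction backwards and close with the matrix step. If $X(E)$ is periodic then $M^n = I$ for some $n \geq 1$, whence $M^{-1} = M^{n-1}$ again has nonnegative integer entries. I would then invoke the classical fact that a square matrix with nonnegative integer entries whose inverse also has nonnegative integer entries must be a permutation matrix; this can be proved by showing that the supports of the rows of $M$ and the columns of $M^{-1}$ partition the index set into singletons, and that the surviving entries multiply to $1$ and hence equal $1$ (here finiteness of $E^0$ is essential). Thus $M$ is a permutation matrix, every vertex of $E$ has exactly one incoming and one outgoing edge, and following these edges decomposes $E^0$ into the simple cycles given by the cycle structure of the permutation; that is, $E$ is a disjoint union of simple cycles, contrary to $\neg(2)$. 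Combining the two directions yields the equivalence of (1) and (2).
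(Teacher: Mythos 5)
Your route is genuinely different from the paper's, and most of it is correct. The paper works directly with a $\C^k$-linear correspondence isomorphism $h : C(E^n) \to C(E^0)$, evaluating the inner product and the left action on the point masses $\delta_\alpha$ to conclude that every length-$n$ path is a cycle and that $E^n$ has exactly $k$ elements, then reads off the cycle decomposition; you instead isolate a structural lemma ($X(F) \cong C(F^0)$ as correspondences if and only if $F$ is a disjoint union of loops, one at each vertex), prove it by comparing the complemented submodules $X(F)p_v$ with $Ap_v$ and matching the two projections determined by $p_v$, and then finish with the classical fact that a nonnegative integer matrix with nonnegative integer inverse is a permutation matrix. Your lemma and its proof are fine: the fiber argument correctly forces $|r^{-1}(v)| = 1$, the projection argument correctly forces $s^{-1}(v) = r^{-1}(v)$, and the lcm direction agrees with the paper's Example~\ref{lcm-periodic-ex}. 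What your packaging buys is a reusable lemma plus a clean matrix-theoretic endgame; the paper's argument avoids ever introducing the vertex matrix.

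There is, however, a genuine gap, and it occurs exactly in the case the proposition emphasizes: $E$ may have infinitely many edges. Since $E^0$ is finite, infinitely many edges force $M_{vw} = \infty$ for some pair $(v,w)$, so your opening assertion that $M$ ``has nonnegative integer entries'' is false in general; a priori the entries lie in $\N \cup \{\infty\}$. In particular, in the direction $\neg(1) \Rightarrow \neg(2)$, the step ``$M^n = I$, whence $M^{-1} = M^{n-1}$ again has nonnegative integer entries'' is not available as written: until you know the entries of $M$ are finite, the expression $M^{-1}$ is meaningless and your classical fact (which concerns genuine integer matrices) cannot be invoked. The repair is short and stays inside your framework: your lemma shows that $M^n = I$ means $E^{\times n}$ is a disjoint union of loops, one at each vertex, so $|E^n| = |E^0| < \infty$; since $E$ has no sinks, every edge of $E$ extends to a path of length $n$ beginning with that edge, and distinct edges yield distinct paths, so $|E^1| \le |E^n| < \infty$ and $M$ is an honest nonnegative integer matrix. (Alternatively, argue over $\N \cup \{\infty\}$: $M^n = I$ forces every row of $M$ to be nonzero, so if $M_{vw} = \infty$ there is a length-$(n-1)$ path from $w$ to some $u$ with $(M^{n-1})_{wu} \ge 1$, giving $(M^n)_{vu} = \infty$, a contradiction.) With finiteness established, the permutation-matrix fact and the translation back to a disjoint union of simple cycles go through, and your proof is complete.
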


\begin{proof}
We shall establish the result by proving the contrapositives:  $X(E)$ is periodic if and only if $E$ is a disjoint union of cycles.

If $E$ is disjoint union of cycles, then since $E$ has a finite number of vertices there are a finite number of cycles in this disjoint union.  Let $n$ be the least common multiple of the lengths of these cycles.  Then $E^{\times n}$ is a graph with a single cycle of length one at each vertex.  Hence we have the $C^*$-correspondence isomorphisms  $X(E)^{\otimes n} \cong X(E^{\times n}) \cong A$ (cf. Example~\ref{lcm-periodic-ex}).  Thus $X(E)$ is periodic.

Conversely, suppose that $X(E)$ is periodic.  Then there exists $n \in \N$ such that $X(E)^{\otimes n} \cong A$ as $C^*$-correspondences.  For convenience of notation, label the vertices as $E^0 = \{1, \ldots, k \}$.  Then $C(E^0) \cong \C^k$, and we have the $C^*$-correspondence isomorphisms $X(E^{\times n}) \cong X(E)^{\otimes n} \cong \C^k \cong A$.  Recalling that $X(E^{\times n}) := C(E^n)$, we conclude there exists a $\C^k$-linear isomorphism $h : C(E^n) \to C(E^0) \cong \C^k$.  For each $\alpha \in E^n$, let $\delta_\alpha : E^n \to \C$ be the function with $\delta_\alpha (\alpha) = 1$ and $\delta_\alpha (\beta) = 0$ when $\beta \neq \alpha$.   Also, for each $1 \leq i \leq k$, let $e_i$ denote the standard basis element of $\C^k$ having 1 in entry $i$ and $0$ in all other entries.  Suppose $h( \delta_\alpha) = (z_1, \ldots, z_k)$.  Then
\begin{align*}
e_{r(\alpha)} &= \langle \delta_\alpha, \delta_\alpha \rangle = \langle h(\delta_\alpha), h(\delta_\alpha) \rangle = (z_1, \ldots, z_k)^* (z_1, \ldots, z_k) \\
&= (\overline{z_1}, \ldots, \overline{z_k}) (z_1, \ldots, z_k) = ( |z_1|^2, \ldots, |z_k|^2).
\end{align*}
Consequently $z_i = 0$ for $i \neq r(\alpha)$, and $z_{r(\alpha)} \neq 0$.  Thus $h(\delta_\alpha) =  z_{r(\alpha)} e_{r(\alpha)}$.  Hence
$$ z_{r(\alpha)} e_{r(\alpha)} = h(\delta_\alpha) = h( e_{s(\alpha)} \cdot \delta_\alpha ) = e_{s(\alpha)} h(\delta_\alpha) = e_{s(\alpha)} \cdot (z_1, \ldots, z_k) = z_{s(\alpha)} e_{s(\alpha)}.
$$
Since $z_{r(\alpha)} \neq 0$, the above equation implies $s(\alpha) = r(\alpha)$.  Because $\alpha$ is an arbitrary element of $E^n$, we conclude every element of $E^n$ is a cycle.  Moreover, since $C(E^n) \cong C(E^0) \cong \C^k$, we deduce $E^n$ has $k$ elements.  Thus $E^n = \{ \alpha_1, \ldots, \alpha_k \}$, where $\alpha_i$ is a cycle of length $n$ for each $1 \leq i \leq k$.  Therefore $E$ is a disjoint union of simple cycles.
\end{proof}

\begin{definition}
Let $E = (E^0, E^1, r, s)$ be a graph.  We say $E$ is \emph{weakly connected} if for all $v,w \in E^0$ there exists a sequence of vertices $v_1, \ldots, v_n \in E^0$ with $v_1 = v$, $v_n = w$, and for all $1 \leq i \leq n$ there is either an edge in $E$ from $v_i$ to $v_{i+1}$ or there is an edge in $E$ from $v_{i+1}$ to $v_i$.  We say $E$ is strongly connected if for all $v,w \in E^0$ there is a path in $E$ from $v$ to $w$.
\end{definition}

\begin{corollary} \label{weakly-connected-nonperiodic-cor}
Let $E$ be a weakly connected graph with a finite number of vertices, no sinks, and no sources.  (Note that $E$ is not required to have a finite number of edges.)  Then the following are equivalent:
\begin{itemize}
\item[(1)] $X(E)$ is nonperiodic.
\item[(2)] $E$ is not a simple cycle.
\end{itemize}
\end{corollary}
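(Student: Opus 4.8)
The plan is to reduce the entire statement to the Proposition immediately preceding this corollary, which asserts that (for a graph with finitely many vertices, no sinks, and no sources) $X(E)$ is nonperiodic if and only if $E$ is not a disjoint union of simple cycles. Since the corollary merely adds the hypothesis of weak connectivity, it suffices to show that under this extra hypothesis the condition ``$E$ is a disjoint union of simple cycles'' collapses to ``$E$ is a single simple cycle.'' Granting this, the negations agree as well, and feeding the equivalence into the preceding Proposition yields the stated equivalence of (1) and (2) at once.

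First I would dispatch the easy direction: a simple cycle is trivially a disjoint union of simple cycles (with a single cyclic component), and it is visibly weakly connected. So if $E$ is a simple cycle, the preceding Proposition gives that $X(E)$ is periodic; that is, the failure of (2) forces the failure of (1).

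The substantive step is the converse claim: if $E$ is weakly connected and is a disjoint union of simple cycles, then $E$ must be a single simple cycle. Here I would argue by counting connected components. Write $E = E_1 \sqcup \cdots \sqcup E_k$ as a disjoint union of simple cycles, and recall that by the meaning of ``disjoint union'' there are no edges between distinct $E_i$ and $E_j$. If $k \geq 2$, choose a vertex $v$ in $E_1$ and a vertex $w$ in $E_2$. Any weak-connectivity walk from $v$ to $w$ (in the undirected sense of the definition of weakly connected) would have to use an edge joining two of the cyclic components, but no such edge exists, contradicting weak connectivity. Hence $k = 1$, and $E$ is a single simple cycle.

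Combining the two directions, under the weak-connectivity hypothesis the statements ``$E$ is a disjoint union of simple cycles'' and ``$E$ is a simple cycle'' are equivalent, so their negations coincide, and the preceding Proposition then gives $X(E)$ nonperiodic $\iff$ $E$ is not a simple cycle. I do not anticipate any genuine obstacle; the only point meriting care is to make explicit that the ``disjoint union'' appearing in the preceding Proposition genuinely forbids edges between the cyclic components, which is precisely what weak connectivity rules out as soon as there is more than one component.
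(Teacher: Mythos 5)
Your proposal is correct and matches the paper's intended derivation: the corollary is stated without proof precisely because, as you argue, weak connectivity forces a disjoint union of simple cycles to consist of a single component, so condition (2) of the preceding Proposition collapses to ``$E$ is not a simple cycle.'' Your component-counting argument ruling out $k \geq 2$ components is exactly the (implicit) reasoning the paper relies on.
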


\begin{corollary} \label{strongly-connected-nonperiodic-cor}
Let $E$ be a strongly connected graph with a finite number of vertices.  (Note that $E$ is not required to have a finite number of edges.)  Then the following are equivalent:
\begin{itemize}
\item[(1)] $X(E)$ is nonperiodic.
\item[(2)] $E$ is not a simple cycle.
\item[(3)] $E$ satisfies Condition~(L).
\end{itemize}
\end{corollary}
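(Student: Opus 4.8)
The plan is to obtain the equivalence (1) $\iff$ (2) for free from the previous corollary and then to prove (2) $\iff$ (3) by a direct combinatorial argument. For the first reduction, I would first record that a strongly connected graph is automatically weakly connected, and that a strongly connected graph with finitely many vertices has neither sinks nor sources: if $v$ were a sink then no path could leave $v$, contradicting the existence of a path from $v$ to some other vertex (and when $|E^0| = 1$ strong connectivity forces a loop at $v$, so $v$ is again not a sink), with the argument for sources being symmetric. Thus the standing hypotheses of Corollary~\ref{weakly-connected-nonperiodic-cor} are satisfied, and that result yields (1) $\iff$ (2) with no further work.

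It remains to establish (2) $\iff$ (3). The implication $\neg(2) \Rightarrow \neg(3)$ is immediate: if $E$ is a simple cycle, then every vertex emits exactly one edge (its edge along the cycle), so the cycle comprising $E$ has no exit and Condition~(L) fails.

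For the converse I would argue the contrapositive $\neg(3) \Rightarrow \neg(2)$. Assume Condition~(L) fails, so there is a cycle $\alpha = e_1 \ldots e_n$ with no exit. Having no exit means that for each $i$ the only edge with source $s(e_i)$ is $e_i$ itself; in particular each vertex lying on $\alpha$ emits a \emph{unique} edge, whose range is again a vertex of $\alpha$. Writing $C$ for the finite nonempty set of vertices of $\alpha$, it follows that every path starting at a vertex of $C$ remains inside $C$. Since $E$ is strongly connected, every vertex of $E^0$ is reachable from a fixed vertex of $C$, forcing $C = E^0$. Hence every vertex of $E$ has exactly one outgoing edge, and I may define a function $f : E^0 \to E^0$ sending each vertex to the range of its unique outgoing edge.

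Finally I would analyze the deterministic self-map $f$ of the finite set $E^0$. Because outgoing edges are unique, the set of vertices reachable from any $v$ is precisely the forward orbit $\{ f^k(v) : k \geq 1 \}$, so strong connectivity says this orbit equals $E^0$ for every $v$. Choosing $v$ to be a periodic point of $f$ (one exists since $E^0$ is finite), its forward orbit is the single $f$-cycle through $v$, and as this cycle must be all of $E^0$, the map $f$ is a cyclic permutation of $E^0$. Since $E$ has exactly the $|E^0|$ edges recorded by $f$, one out of each vertex, arranged in a single cycle visiting each vertex once, $E$ is a simple cycle; this proves $\neg(3) \Rightarrow \neg(2)$ and completes the corollary. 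I expect the main obstacle to be precisely this step of promoting the local ``no-exit'' cycle to a global simple-cycle structure, namely showing $C = E^0$ and that $f$ is a cyclic permutation, as it is where strong connectivity and the finiteness of $E^0$ must be combined.
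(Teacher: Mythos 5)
Your proof is correct, and your treatment of (1) $\iff$ (2) --- reducing to Corollary~\ref{weakly-connected-nonperiodic-cor} after checking that strong connectivity with finitely many vertices rules out sinks and sources --- is exactly the paper's reduction (the paper simply cites the corollary; your verification of its hypotheses is a welcome addition). Where you genuinely diverge is in (2) $\iff$ (3). The paper proves (2) $\implies$ (3) directly: given any cycle $\alpha$ in a graph that is not a simple cycle, it asserts that some vertex $w$ lies off $\alpha$, takes a path from $s(\alpha)$ to $w$, and extracts an exit. You instead prove the contrapositive $\neg(3) \implies \neg(2)$ by a dynamical argument: a no-exit cycle forces each of its vertices to emit exactly one edge, strong connectivity forces those vertices to exhaust $E^0$, and the resulting self-map of the finite vertex set must be a cyclic permutation, so $E$ is a simple cycle. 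Your route is longer but more robust. The paper's intermediate claim --- that in a non-simple-cycle graph every cycle misses some vertex --- is not literally true: take $E^0 = \{v,w\}$ with one edge $v \to w$ and two edges $w \to v$; this $E$ is strongly connected and not a simple cycle, yet the length-two cycle through $v$ and $w$ visits every vertex (Condition~(L) still holds there, since that cycle has an exit, but the paper's stated reason does not apply). Your contrapositive argument handles precisely this case of a cycle passing through all vertices, so the step you flagged as the main obstacle --- promoting the local no-exit cycle to the global simple-cycle structure via $C = E^0$ and the cyclic permutation --- is exactly the content needed to make the equivalence airtight.
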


\begin{proof}
The equivalence $(1) \iff (2)$ follows from Corollary~\ref{weakly-connected-nonperiodic-cor}.  

To see $(2) \implies (3)$, suppose $E$ is not a simple cycle.  If $\alpha$ is any cycle in $E$, then since $E$ is not a simple cycle, there exists a vertex $w \in E^0$ that is not on $\alpha$.  Since $E$ is strongly connected, there exists a path from $s(\alpha)$ to $w$, and some edge of this path must be an exit for $\alpha$.  Hence $E$ satisfies Condition~(L), and $(3)$ holds,

To see $(3) \implies (2)$, simply note that if every cycle in $E$ has an exit, then $E$ cannot be a simple cycle.
\end{proof}

\begin{example} \label{weakly-connected-ex}
If $E$ is the graph

$$
\xymatrix{
\bullet \ar[r] \ar@(dl,ul) & \bullet \ar@(dr,ur)
}
$$

$ $

\noindent then $E$ is weakly connected and does not satisfy Condition~(L).  In addition, for any $n \in \N$, we see that there are paths of length $n$ in $E$ that are not cycles.  Hence, the correspondence $X(E^{\times n})$ is not isomorphic to the correspondence $C_0(E^0)$, so that the correspondence $X(E)^{\otimes n}$ is not isomorphic to the identity correspondence on $A$.  Consequently, $X(E)$ is nonperiodic.  
\end{example}

\begin{example} \label{weakly-connected-no-invariant-ideals-ex}
If $E$ is the graph

$$
\xymatrix{
\bullet \ar[r] & \bullet \ar@(dr,ur)
}
$$

$ $

\noindent then $E$ is weakly connected and does not satisfy Condition~(L).  In addition, for any $n \in \N$, we see that there are paths of length $n$ in $E$ that are not cycles.  Hence, the correspondence $X(E^{\times n})$ is not isomorphic to the correspondence $C_0(E^0)$, so that the correspondence $X(E)^{\otimes n}$ is not isomorphic to the identity correspondence on  $A$.  Consequently, $X(E)$ is nonperiodic.  Furthermore, $E$ has no nontrivial saturated hereditary subsets, and hence $A \cong \C^2$ has no invariant ideals other than $\{ 0 \}$ and $A$.  Since $E$ does not satisfy Condition~(L), we conclude $\mathcal{O}_{X(E)} \cong C^*(E)$ is not simple.  (In fact, $C^*(E) \cong M_2(C(\T))$, so $C^*(E)$ is Morita equivalent to $C(\T)$, and $C^*(E)$ has uncountably many ideals.)  Note that since $E$ has a source, the correspondence $X(E)$ is not full, and thus the hypotheses of Schweizer's Theorem are not met.
\end{example}

\begin{example} \label{infinite-no-invariant-ideals-ex}
If $E$ is the graph

$$
\xymatrix{
\cdots \ar[r] & \bullet \ar[r] & \bullet \ar[r] & \bullet \ar[r] & \bullet \ar@(dr,ur)
}
$$

$ $

\noindent then $E$ is weakly connected, has no sinks and no sources, and does not satisfy Condition~(L).  In addition, for any $n \in \N$, we see that there are paths of length $n$ in $E$ that are not cycles.  Hence, the correspondence $X(E^{\times n})$ is not isomorphic to the correspondence $C_0(E^0)$, so that the correspondence $X(E)^{\otimes n}$ is not isomorphic to the identity correspondence on $A$.  Consequently, $X(E)$ is nonperiodic. Furthermore, $E$ has no nontrivial saturated hereditary subsets, and hence $A = C_0(E^0)$ has no invariant ideals other than $\{ 0 \}$ and $A$.  Since $E$ does not satisfy Condition~(L), $\mathcal{O}_{X(E)} \cong C^*(E)$ is not simple.  (In fact, $C^*(E) \cong C(\T) \otimes \K$, so $C^*(E)$ is Morita equivalent to $C(\T)$, and $C^*(E)$ has uncountably many ideals.)
\end{example}

\begin{example}
For each $k \in \N$ let $C_k$ be the graph that is a simple cycle of length $k$.  If $E := C_1 \sqcup C_2 \sqcup \ldots$, then for any $n \in \N$, we see that $E$ contains a path of length $n$ that is not a cycle.  Hence, the correspondence $X(E^{\times n})$ is not isomorphic to the correspondence $C_0(E^0)$, so that the correspondence $X(E)^{\otimes n}$ is not isomorphic to the identity correspondence on $A$.  Consequently, $X(E)$ is nonperiodic.  Thus $X(E)$ is nonperiodic even though $E$ is a disjoint union of simple cycles.
\end{example}

\begin{example} \label{infinite-strongly-connected-ex}
Let $E$ be a strongly connected graph with an infinite number of vertices.  If $\alpha$ is any cycle in $E$, then there exists a vertex $w \in E^0$ that is not on $\alpha$.  Since $E$ is strongly connected, there exists a path from $s(\alpha)$ to $w$, and some edge of this path must be an exit for $\alpha$.  Hence $E$ satisfies Condition~(L).

In addition, since $E$ is strongly connected, for any $n \in \N$ there exists a path of length $n$ in $E$ that is not a cycle.  Hence, the correspondence $X(E^{\times n})$ is not isomorphic to the correspondence $C_0(E^0)$, so that the correspondence $X(E)^{\otimes n}$ is not isomorphic to the identity correspondence on $A$.  Consequently, $X(E)$ is nonperiodic.  

Therefore if $E$ is any strongly connected graph with infinitely many vertices, then $E$ satisfies Condition~(L) and $X(E)$ is nonperiodic.
\end{example}

\begin{remark} \label{conclusions-comparison-rem}

\noindent For a graph $E$, let $X(E)$ denote the graph $C^*$-correspondence over $A := C_0(E^0)$.  Our results and examples from this section can be summarized by the following points:

\begin{itemize}

\item Corollary~\ref{strongly-connected-nonperiodic-cor} states that if $E$ is strongly connected with finitely many vertices, $X(E)$ being nonperiodic is equivalent to $E$ satisfying Condition~(L). 

\item  Example~\ref{weakly-connected-ex} shows that there exists a weakly connected finite graph with no sinks and no sources such that $X(E)$ is nonperiodic, but $E$ does not satisfy Condition~(L).  This shows that, even for finite graphs with no sinks and no sources, $X(E)$ being non-periodic does not imply that $E$ satisfies Condition~(L).  Moreover, if $E$ is the graph of Example~\ref{weakly-connected-ex}, then $C^*(E)$ has proper non-zero ideals that do not contain any vertex projects.  Thus, even in the special case of graph $C^*$-algebras (indeed, even when the graph is weakly connected, finite, has no sinks, and has no sources), the condition of nonperiodic is insufficient to obtain a Cuntz-Krieger Uniqueness Theorem.

\item Example~\ref{weakly-connected-no-invariant-ideals-ex} shows that there exists a weakly connected finite graph $E$ with no sinks (but having a source) for which $X(E)$ is nonperiodic and $A$ has no invariant ideals other than $\{ 0 \}$ and $A$, but $E$ does not satisfy Condition~(L).  Since $E$ does not satisfy Condition~(L), the graph $C^*$-algebra $C^*(E)$ is not simple.  This shows that the conditions ``$X$ is nonperiodic and $A$ has no hereditary ideals other than $\{ 0 \}$ and $A$" are insufficient to imply simplicity of $\mathcal{O}_X$ when $X$ is not full.  Thus a characterization of simplicity for Cuntz-Pimsner algebras of $C^*$-correspondences that are not full (even under the hypotheses that $A$ unital and the left action of $A$ on $X$ is injective) will require conditions other than ``$X$ is nonperiodic and $A$ has no hereditary ideals other than $\{ 0 \}$ and $A$".

\item Example~\ref{infinite-no-invariant-ideals-ex} shows that there exists a weakly connected graph $E$ with no sinks and no sources (but having a countably infinite number of vertices) for which $X(E)$ is nonperiodic and $A$ has no invariant ideals other than $\{ 0 \}$ and $A$, but $E$ does not satisfy Condition~(L).  Since $E$ does not satisfy Condition~(L), the graph $C^*$-algebra $C^*(E)$ is not simple.  This shows that the conditions ``$X$ nonperiodic and $A$ has no hereditary ideals other than $\{ 0 \}$ and $A$" are insufficient to imply simplicity of $\mathcal{O}_X$ when $A$ is not unital.  Thus a characterization of simplicity for Cuntz-Pimsner algebras of $C^*$-correspondences over nonunital $C^*$-algebras (even under the hypotheses that $X$ is full and the left action of $A$ on $X$ is injective) will require conditions other than ``$X$ nonperiodic and $A$ has no hereditary ideals other than $\{ 0 \}$ and $A$".

\item Finally, Example~\ref{infinite-strongly-connected-ex} shows that for strongly connected graphs with an infinite number of vertices, nonperiodic and Condition~(L) are both automatic.  Moreover, in this situation the fact that the graph is strongly connected implies the vertex set has no hereditary subsets that are proper and nonempty, and hence the associated $C^*$-algebra is automatically simple.  
\end{itemize}
\end{remark}

These observations show that while the notion of nonperiodic is useful for characterizing simplicity under the hypotheses of Schweizer's Simplicity Theorem, it is insufficient for characterizing simplicity more generally.  Moreover, even when Schweizer's hypotheses hold, the condition of nonperiodic is insufficient for obtaining a Cuntz-Krieger Uniqueness Theorem.  Indeed, we see this even in the special case of graph $C^*$-algebras, where Condition~(L) is a necessary and sufficient condition for a Cuntz-Krieger Uniqueness Theorem to hold.

This gives additional credibility for the usefulness of the notion of Condition~(S) that we introduced in this paper.  First of all, under the hypothesis that the left acton is injective, Condition~(S) coincides with Condition~(L) for graphs, topological graphs, and topological quivers, suggesting it is a reasonable generalization of Condition~(L) for general $C^*$-correspondences.  Second, unlike the nonperiodic condition, we have been able to use Condition~(S) to obtain a Cuntz-Krieger Uniqueness Theorem for Cuntz-Pimsner algebras of $C^*$-correspondence with injective left action.  Third, our Cuntz-Krieger Uniqueness theorem allows us to obtain sufficient conditions for simplicity of Cuntz-Pimsner algebras in terms of Condition~(S).

%%%%%%%%%%%%%%%%%%%%%%%%%%%%%%%%%%%%%%%%%%%%%%%%%%%%%%%%
\section{Concluding Remarks}
%%%%%%%%%%%%%%%%%%%%%%%%%%%%%%%%%%%%%%%%%%%%%%%%%%%%%%%%

We conclude with two questions that are important for extending the work in this paper.

\medskip

\noindent \textbf{Question 1:} Let $X$ be a $C^*$-correspondence over $A$ with injective left action.  If $\mathcal{O}_X$ is simple, is it necessary that $X$ satisfies Condition~(S)?

\medskip

\noindent When $X = X(\Q)$ is the $C^*$-correspondence of a topological quiver $\Q$, we know that $\mathcal{O}_{X(E)} = C^*(\Q)$ simple implies $\Q$ satisfies Condition~(L), which implies $X(\Q)$ satisfies Condition~(S).  Thus we have an affirmative answer to Question~(1) for $C^*$-correspondences of topological quivers, topological graphs, and graphs.  If an affirmative answer to Question~(1) is obtained in general,  then Condition~(S) is precisely the correct generalization of Condition~(L) for $C^*$-correspondences with injective left action.  Moreover, an affirmative answer to Question~(1) would provide a converse to Theorem~\ref{simple-sufficient-general-thm}(2), giving necessary and sufficient conditions for the simplicity of the Cuntz-Krieger algebra of any $C^*$-correspondence.

\medskip

\noindent \textbf{Question 2:} How can Condition~(S) be appropriately modified to obtain a Cuntz-Krieger Uniqueness Theorem for general $C^*$-correspondences whose left action is not necessarily injective?

\medskip

\noindent As noted in Proposition~\ref{Condition-S-implies-injective-prop}, a $C^*$-correspondence satisfying Condition~(S) necessarily has injective left action.  It would be desirable to remove the hypothesis of injective left action and obtain an appropriate condition for general $C^*$-correspondences.  While there are techniques for extending results for $C^*$-correspondences with injective left actions to the general situation (e.g., via the method of ``adding tails" described in \cite{MT}), translating Condition~(S) to the non-injective setting has been complicated and elusive, and we have been unable to apply these techniques in any fruitful way.  Currently, we are uncertain how to modify Condition~(S) when the left action is not injective.

\end{document}